\newtheorem{theorem}{Theorem}
\newtheorem{lemma}[theorem]{Lemma}
\newtheorem{corollary}[theorem]{Corollary}
\newtheorem{algorithm}[theorem]{Algorithm}
\newtheorem{conjecture}{Conjecture} 
\theoremstyle{definition}
\newtheorem*{defn}{Definition}
\newtheorem*{remark}{Remark}
\newcounter{ctr-numvert}
\newcounter{ctr-can-fast}
\newcommand{\pg}[1]{\mathscr{P}(#1)}
\newcommand{\rpg}[1]{\mathscr{P}_1(#1)}
\newcommand{\regina}{\emph{Regina}}
\newcommand{\sig}{\sigma}
\newcommand{\tri}{\mathcal{T}}
\begin{document}

\title{The Pachner graph and the simplification \\ of 3-sphere triangulations}
\author{Benjamin A.~Burton}
\date{February 23, 2011} 

\maketitle

\begin{abstract}
    It is important to have fast and effective methods for simplifying
    3-manifold triangulations without losing any topological information.
    In theory this is difficult: we might need to make a triangulation
    super-exponentially more complex before we can make it smaller than its
    original size.  Here we present experimental work suggesting that for
    3-sphere triangulations the reality is far different: we never need to
    add more than two tetrahedra, and we never need more than a handful
    of local modifications.  If true in general, these extremely
    surprising results
    would have significant implications for decision algorithms and the
    study of triangulations in 3-manifold topology.

    The algorithms behind these experiments are interesting
    in their own right.  Key techniques include
    the isomorph-free generation of all 3-manifold triangulations of a
    given size, polynomial-time computable
    signatures that identify triangulations
    uniquely up to isomorphism, and parallel algorithms for studying
    finite level sets in the infinite Pachner graph.

    \medskip
    \noindent \textbf{ACM classes}\quad
    F.2.2; G.2.1; G.2.2; D.1.3

    \medskip
    \noindent \textbf{Keywords}\quad
    Triangulations, 3-manifolds, Pachner moves, 3-sphere recognition,
    isomorph-free enumeration
\end{abstract}

%
%

\section{Introduction}

Triangulations of 3-manifolds are ubiquitous in
computational knot theory
and low-dimensional topology.  They are easily obtained and offer
a natural setting for many important algorithms.

Computational topologists typically allow triangulations in which the
constituent tetrahedra may be ``bent'' or ``twisted'', and where
distinct edges or vertices of the same tetrahedron
may even be joined together.  Such triangulations (sometimes called
\emph{semi-simplicial} or \emph{pseudo-triangulations})
can describe rich topological structures using remarkably few tetrahedra.
For example, the 3-dimensional sphere can be built from
just one tetrahedron, and more complex spaces such as non-trivial
surface bundles can be built from as few as six \cite{matveev90-complexity}.

An important class of triangulations is the
\emph{one-vertex triangulations}, in which all vertices of all tetrahedra
are joined together as a single point.  These are simple to obtain
\cite{jaco03-0-efficiency,matveev03-algms},
and they are often easier to deal with both theoretically and computationally
\cite{burton10-dd,jaco02-algorithms-essential,matveev03-algms}.

Keeping the number of tetrahedra small is crucial in computational
topology, since many important algorithms are exponential (or even
super-exponential) in the number of tetrahedra
\cite{burton10-complexity,burton10-dd}.
To this end, topologists have developed a rich suite of local
simplification moves that allow us to reduce the number of
tetrahedra without losing any topological information
\cite{burton04-facegraphs,matveev98-recognition}.

The most basic of these are the four \emph{Pachner moves}
(also known as \emph{bistellar moves}).  These include the
3-2 move (which reduces the number of tetrahedra but preserves the
number of vertices), the 4-1 move (which reduces both numbers), and also
their inverses, the 2-3 and 1-4 moves.
It is known that any two triangulations of the same closed
3-manifold are related by a sequence of Pachner moves
\cite{pachner91-moves}.  Moreover, if both are one-vertex
triangulations then we can relate them using 2-3 and 3-2 moves alone
\cite{matveev03-algms}.

However, little is known about how \emph{difficult} it is to relate two
triangulations by a sequence of Pachner moves.  In a series of papers,
Mijatovi{\'c} develops upper bounds on the number of moves required for
various classes of 3-manifolds \cite{mijatovic03-simplifying,mijatovic04-sfs,
mijatovic05-knot,mijatovic05-haken}.
All of these bounds are super-exponential in the number of tetrahedra,
and some even involve exponential towers of exponential functions.
For relating one-vertex triangulations using only 2-3 and 3-2 moves,
no explicit bounds are known at all.

In this paper we focus on one-vertex triangulations of
the 3-sphere.  Here simplification is tightly linked to the important
problem of \emph{3-sphere recognition}, where we are given an input
triangulation $\tri$ and asked whether $\tri$ represents the 3-sphere.
This problem plays an key role in other important topological
algorithms, such as connected sum decomposition
\cite{jaco03-0-efficiency,jaco95-algorithms-decomposition} and
unknot recognition \cite{hara05-unknotting},
and it is now becoming important in computational \emph{4-manifold} topology.
We can use Pachner moves for 3-sphere recognition in two ways:
\begin{itemize}
    \item They give us a \emph{direct} 3-sphere recognition
    algorithm: try all possible sequences of
    Pachner moves on $\tri$ up to Mijatovi{\'c}'s upper bound,
    and return ``true'' if and only if we reach one of the well-known
    ``canonical'' 3-sphere triangulations with one or two tetrahedra.

    \item They also allow a \emph{hybrid}
    recognition algorithm: begin with a fast and/or greedy
    procedure to simplify $\tri$ as far as possible within a limited
    number of moves.  If we reach a canonical 3-sphere triangulation
    then return ``true''; otherwise run a more traditional 3-sphere
    recognition algorithm on our new (and hopefully simpler) triangulation.
\end{itemize}

The direct algorithm lies well outside the realm of feasibility:
Mijatovi{\'c}'s bound is super-exponential in the number of tetrahedra,
and the running time is at least exponential in Mijatovi{\'c}'s bound.
Current implementations \cite{burton04-regina} use the
hybrid method, which is extremely effective in practice.
Experience suggests that when $\tri$ \emph{is} the 3-sphere, the greedy
simplification almost always gives a canonical triangulation.
If simplification fails, we revert to the traditional algorithm of
Rubinstein \cite{rubinstein95-3sphere}; although this runs in exponential
time, recent improvements by several authors have made it
extremely effective for moderate-sized problems
\cite{burton10-dd,burton10-quadoct,jaco03-0-efficiency,
thompson94-thinposition}.\footnote{%
    See \cite{burton10-quadoct} for explicit measurements of running time.}

Our aims in this paper are:
\begin{itemize}
    \item to measure how easy or difficult it is \emph{in practice}
    to relate two triangulations of the 3-sphere using Pachner moves, or
    to simplify a 3-sphere triangulation to use fewer tetrahedra;

    \item to understand why greedy simplification techniques work so
    well in practice, despite the prohibitive theoretical bounds of
    Mijatovi{\'c};

    \item to investigate the possibility that Pachner moves could be
    used as the basis for a direct 3-sphere recognition algorithm
    that runs in sub-exponential or even polynomial time.
\end{itemize}

Fundamentally this is an experimental paper (though the theoretical
underpinnings are interesting in their own right).
Based on an exhaustive study of $\sim 150$~million
triangulations (including $\sim 31$~million one-vertex triangulations of the
3-sphere), the answers appear to be:
\begin{itemize}
    \item we can relate and simplify one-vertex triangulations of
    the 3-sphere using remarkably few Pachner moves;

    \item both procedures require us to add \emph{at most two}
    extra tetrahedra, which explains why greedy simplification works so well;

    \item the number of moves required to simplify such a
    triangulation could also be bounded by a constant,
    which means polynomial-time 3-sphere recognition
    may indeed be possible.
\end{itemize}

These observations are extremely surprising, especially in light of
Mijatovi{\'c}'s bounds.  If they can be proven in general---yielding
a polynomial-time 3-sphere recognition algorithm---this would be a
significant breakthrough in computational topology.

In Section~\ref{s-prelim} we outline preliminary concepts and
introduce the \emph{Pachner graph}, an infinite graph whose nodes
represent triangulations and whose arcs represent Pachner moves.
This graph is the framework on which we build the rest of the paper.
We define \emph{simplification paths} through the graph,
as well as the key quantities of \emph{length} and \emph{excess height}
that we seek to measure.

We follow in Section~\ref{s-tools} with two key tools for studying the
Pachner graph: an isomorph-free census of all closed 3-manifold
triangulations with $\leq 9$ tetrahedra (which gives us the nodes of the
graph), and \emph{isomorphism signatures} of triangulations that can be
computed in polynomial time (which allow us to construct the arcs of the
graph).

Section~\ref{s-analysis} describes parallel algorithms for bounding
both the length and excess height of simplification paths, and
presents the highly unexpected experimental results outlined above.
We finish in Section~\ref{s-conc} with a discussion of the implications
and consequences of these results.

%
%

\section{Triangulations and the Pachner graph} \label{s-prelim}

A \emph{3-manifold triangulation of size $n$} is a collection of
$n$ tetrahedra whose $4n$ faces are affinely identified
(or ``glued together'') in $2n$ pairs so that the resulting
topological space is a closed 3-mani\-fold.\footnote{%
    It is sometimes useful to consider \emph{bounded} triangulations
    where some faces are left unidentified, or \emph{ideal}
    triangulations where the overall space only becomes a 3-manifold
    when we delete the vertices of each tetrahedron.
    Such triangulations do not concern us here.}
We are not interested in the shapes or sizes of tetrahedra (since these
do not affect the topology), but merely the combinatorics of how the faces are
glued together.
Throughout this paper, all triangulations and 3-manifolds are assumed to
be connected.

We do allow two faces of the same tetrahedron to be identified, and we also
note that distinct edges or vertices of the same tetrahedron might
become identified as a by-product of the face gluings.
A set of tetrahedron vertices that are identified together is collectively
referred to as a \emph{vertex of the triangulation}; we define an
\emph{edge} or \emph{face of the triangulation}
in a similar fashion.

\begin{figure}[htb]
    \centering
    \includegraphics{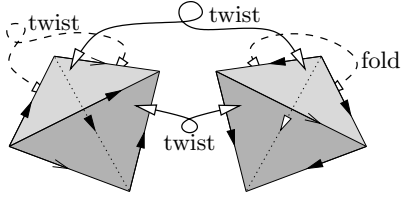}
    \caption{A 3-manifold triangulation of size $n=2$}
    \label{fig-rp3}
\end{figure}

Figure~\ref{fig-rp3} illustrates a 3-manifold triangulation
of size $n=2$.
Here the back two faces of the first tetrahedron are identified with a
twist, the front faces of the first tetrahedron are identified with the
front faces of the second using more twists, and the back faces of the
second tetrahedron are identified together by directly ``folding'' one
onto the other.
This is a \emph{one-vertex triangulation} since all eight tetrahedron
vertices become identified together.  The triangulation has three distinct
edges, indicated in the diagram by three distinct arrowheads.

Mijatovi{\'c} \cite{mijatovic03-simplifying}
describes a \emph{canonical triangulation} of the 3-sphere of size $n=2$,
formed by a direct identification of the boundaries of two tetrahedra.
In other words, given two tetrahedra $\mathit{ABCD}$ and $A'B'C'D'$,
we directly identify face $\mathit{ABC}$ with $A'B'C'$, $ABD$ with $A'B'D'$,
and so on.  The resulting triangulation has four faces,
six edges, and four vertices.

The four \emph{Pachner moves} describe local
modifications to a triangulation.  These include:
\begin{itemize}
    \item the \emph{2-3 move}, where we replace two distinct
    tetrahedra joined along a common face with three distinct tetrahedra
    joined along a common edge;
    \item the \emph{1-4 move}, where we replace a single
    tetrahedron with four distinct tetrahedra meeting at a common internal
    vertex;
    \item the \emph{3-2} and \emph{4-1 moves}, which are inverse to the
    2-3 and 1-4 moves.
\end{itemize}

\begin{figure}[htb]
    \centering
    \subfigure[The 2-3 and 3-2 moves]{%
        \label{sub-pachner-23} \includegraphics[scale=0.45]{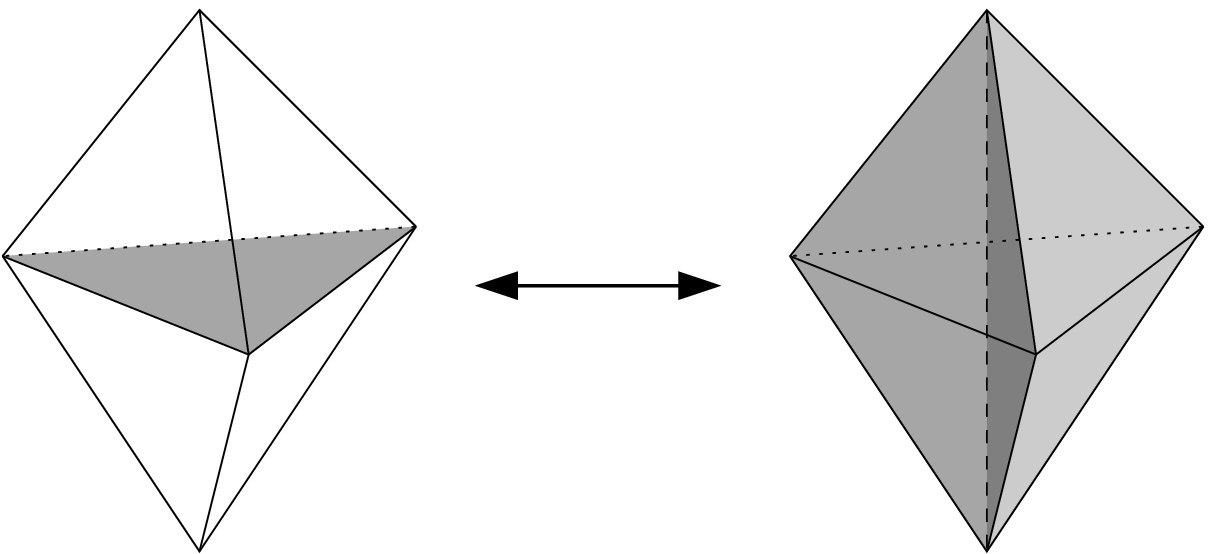}}
    \hspace{1.5cm}
    \subfigure[The 1-4 and 4-1 moves]{%
        \label{sub-pachner-14} \includegraphics[scale=0.45]{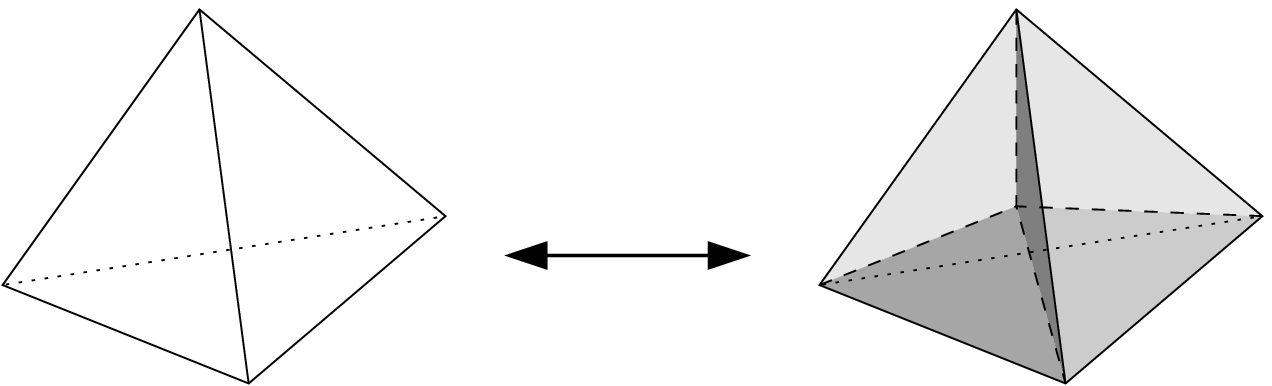}}
    \caption{The four Pachner moves for a 3-manifold triangulation}
    \label{fig-pachner}
\end{figure}

These four moves are illustrated in Figure~\ref{fig-pachner}.
Essentially, the 1-4 and 4-1 moves retriangulate the interior of a
pyramid, and the 2-3 and 3-2 moves retriangulate the interior of a
bipyramid.
It is clear that Pachner moves do not change the topology of the
triangulation (i.e., the underlying 3-manifold remains the same).
Another important observation is that the 2-3 and 3-2 moves do not change
the number of vertices in the triangulation.

Two triangulations are \emph{isomorphic} if they are identical up to a
relabelling of tetrahedra and a reordering of the four vertices of each
tetrahedron (that is, isomorphic in the usual combinatorial sense).
Up to isomorphism, there are finitely many distinct triangulations of
any given size.

Pachner originally showed that any two
triangulations of the same closed 3-manifold can be made isomorphic by
performing a sequence of Pachner moves \cite{pachner91-moves}.\footnote{%
    As Mijatovi{\'c} notes,
    Pachner's original result was proven only for true simplicial complexes,
    but it is easily extended to the more flexible definition of a
    triangulation that we use here \cite{mijatovic03-simplifying}.
    The key step is to remove irregularities by performing
    a second barycentric subdivision using Pachner moves.}
Matveev later strengthened this result to show that any two
\emph{one-vertex} triangulations of the same closed 3-manifold
with at least two tetrahedra can be made isomorphic through a sequence
of 2-3 and/or 3-2 moves \cite{matveev03-algms}.
The two-tetrahedron condition is required because it is impossible to
perform a 2-3 or 3-2 move upon a one-tetrahedron triangulation (each move
requires two or three distinct tetrahedra).

In this paper we introduce the \emph{Pachner graph}, which describes
\emph{how} distinct triangulations of a closed 3-manifold can be
related via Pachner moves.
We define this graph in terms of \emph{nodes} and \emph{arcs}, to avoid
confusion with the \emph{vertices} and \emph{edges} that appear in
3-manifold triangulations.

\begin{defn}[Pachner graph]
    Let $M$ be any closed 3-manifold.  The \emph{Pachner graph} of $M$,
    denoted $\pg{M}$, is an infinite graph constructed as follows.
    The nodes of $\pg{M}$ correspond to isomorphism classes of
    triangulations of $M$.  Two nodes of $\pg{M}$ are joined by an
    arc if and only if there is some Pachner move that converts
    one class of triangulations into the other.

    The \emph{restricted Pachner graph} of $M$, denoted $\rpg{M}$,
    is the subgraph of $\pg{M}$ defined by only those nodes
    corresponding to one-vertex triangulations.
    The nodes of $\pg{M}$ and $\rpg{M}$ are partitioned into finite
    \emph{levels} $1,2,3,\ldots$, where each level~$n$ contains the nodes
    corresponding to $n$-tetrahedron triangulations.
\end{defn}

\begin{figure}[htb]
    \centering
    \includegraphics{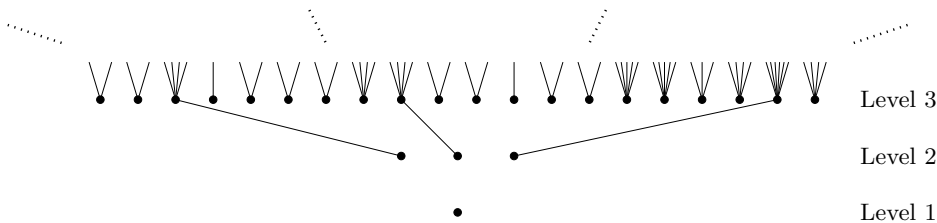}
    \caption{Levels 1--3 of the restricted Pachner graph of the 3-sphere}
    \label{fig-rpg-s3}
\end{figure}

It is clear that the arcs are well-defined (since Pachner moves are
preserved under isomorphism), and that arcs do not need to be
directed (since each 2-3 or 1-4 move has a corresponding inverse
3-2 or 4-1 move).  In the full Pachner graph $\pg{M}$, each arc
runs from some level $i$ to a nearby level $i\pm1$ or $i\pm3$.
In the restricted Pachner graph
$\rpg{M}$, each arc must describe a 2-3 or 3-2 move, and must run from some
level $i$ to an adjacent level $i\pm1$.
Figure~\ref{fig-rpg-s3} shows the first few levels of the restricted Pachner
graph of the 3-sphere.

We can now reformulate the results of Pachner and Matveev as follows:

\begin{theorem}[Pachner, Matveev] \label{t-connected}
    The Pachner graph of any closed 3-manifold is connected.
    If we delete level~1,
    the restricted Pachner graph of any closed 3-manifold is also connected.
\end{theorem}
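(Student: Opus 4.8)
The plan is to read both statements directly off the theorems of Pachner and Matveev quoted above, after translating between the language of Pachner-move sequences and the language of walks in a graph. The key observation is that, by construction, a walk in $\pg{M}$ from a node $[\tri]$ to a node $[\tri']$ is exactly a finite sequence of Pachner moves carrying some representative of $[\tri]$ to some representative of $[\tri']$, with isomorphic triangulations identified along the way; likewise a walk in $\rpg{M}$ is such a sequence in which every intermediate triangulation is one-vertex, and since the only Pachner moves that preserve the number of vertices are the 2-3 and 3-2 moves, every arc of $\rpg{M}$ records a 2-3 or 3-2 move.

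For the first statement I would argue as follows. Let $[\tri]$ and $[\tri']$ be any two nodes of $\pg{M}$, i.e.\ any two isomorphism classes of triangulations of $M$. By Pachner's theorem there is a sequence of Pachner moves taking $\tri$ to a triangulation isomorphic to $\tri'$; recording the intermediate triangulations up to isomorphism yields a walk in $\pg{M}$ from $[\tri]$ to $[\tri']$. Hence $\pg{M}$ is connected.

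For the second statement, let $[\tri]$ and $[\tri']$ be two nodes of $\rpg{M}$ that do not lie in level~$1$; that is, $\tri$ and $\tri'$ are one-vertex triangulations of $M$, each with at least two tetrahedra. By Matveev's theorem there is a sequence of 2-3 and/or 3-2 moves relating them. Each such move leaves the vertex count unchanged, so every intermediate triangulation is again one-vertex, and the corresponding nodes all lie in $\rpg{M}$. It remains to check that none of these intermediate nodes lands in level~$1$: a 2-3 move sends an $n$-tetrahedron triangulation to an $(n+1)$-tetrahedron one, while a 3-2 move can be performed only when $n \ge 3$ and then produces an $(n-1)$-tetrahedron triangulation, so starting from size at least $2$ such a sequence can never reach size $1$. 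Thus the whole walk stays inside $\rpg{M}$ with level~$1$ removed, and that graph is connected.

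I do not expect a serious obstacle here, since the content is entirely carried by the cited results of Pachner and Matveev; the only point needing a moment's care is the last one---confirming that a sequence of 2-3 and 3-2 moves joining two triangulations of size $\ge 2$ never dips down to size $1$---and this follows at once from the effect of the two moves on the tetrahedron count. One might additionally remark that deleting level~$1$ is genuinely necessary: when $M$ is the $3$-sphere, the unique one-tetrahedron triangulation admits no 2-3 or 3-2 move (each requires two or three distinct tetrahedra), so it forms an isolated node of $\rpg{M}$, and the theorem is sharp in this respect.
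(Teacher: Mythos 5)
Your proposal is correct and matches the paper's treatment: the paper presents this theorem as a direct reformulation of the cited results of Pachner and Matveev in the language of the Pachner graph, with exactly the observations you make (2-3/3-2 moves preserve the vertex count, and a one-tetrahedron triangulation admits no such move, which is why level~1 must be deleted). Your extra check that a 2-3/3-2 sequence starting at size $\geq 2$ never dips to size 1 is the right small verification and is consistent with the paper's remarks.
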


To simplify a triangulation we essentially follow a path through
$\pg{M}$ or $\rpg{M}$ from a higher level to a lower level,
which motivates the following definition.

\begin{defn}[Simplification path]
    A \emph{simplification path} is a directed path through either
    $\pg{M}$ or $\rpg{M}$ from a node at some level $i$ to
    a node at some lower level $<i$.
    The \emph{length} of a simplification path is the number of arcs it
    contains.  The \emph{excess height} of a simplification path
    is the smallest $h \geq 0$ for which
    the entire path stays in or below level $i+h$.
\end{defn}

Both the length and excess height measure how difficult it is to simplify a
triangulation: the length measures the number of Pachner moves,
and the excess height measures the number of extra tetrahedra required.
For the 3-sphere, the only known bounds on these quantities
are the following:

\begin{theorem}[Mijatovi{\'c} \cite{mijatovic03-simplifying}] \label{t-mij}
    Any triangulation of the 3-sphere can be converted into the
    canonical triangulation using less than
    $6 \cdot 10^6 n^2 2^{2 \cdot 10^4 n^2}$ Pachner moves.
\end{theorem}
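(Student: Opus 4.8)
The plan is to reconstruct Mijatovi{\'c}'s argument, whose qualitative content is already given by Theorem~\ref{t-connected}; the task is to make the connectivity quantitative. Three ingredients are needed: (i) a reduction to the PL/simplicial category in which every subdivision is simulated by a controlled number of Pachner moves; (ii) a structural step, special to $S^3$, that uses normal surface theory and thin position to locate a 2-sphere along which the triangulation can be split; and (iii) an induction on a complexity measure of the resulting pieces, with essentially all of the super-exponential loss concentrated in the normalization step of (ii).

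First I would pass from a generalized triangulation $\tri$ of size $n$ to an honest simplicial complex by taking a second barycentric subdivision; since a barycentric subdivision is realizable by Pachner moves at a cost polynomial in the number of tetrahedra, this only multiplies $n$ by a constant and adds a polynomial number of moves. Throughout I would fix a standard model: the canonical two-tetrahedron triangulation of $S^3$ embedded inside a fixed PL sweepout of $S^3$ by 2-spheres (shrinking to two pole points), so that ``reaching the canonical triangulation'' becomes ``making $\tri$ agree with a bounded-size standard piece.''

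The structural heart: given the simplicial triangulation of size $N = O(n)$, invoke Rubinstein--Thompson thin position to isotope the standard sweepout so that some level 2-sphere $\Sigma$ is thin, and then \emph{normalize} $\Sigma$ with respect to $\tri$. Here normal surface theory supplies the quantitative control: $\Sigma$ may be taken normal (or almost normal) with weight, and with a normalizing isotopy, of size $2^{O(N)}$, a consequence of the standard bounds on the vertex solutions and on the Hilbert basis of the matching equations. Cutting $\tri$ along the normalized $\Sigma$ introduces at most $2^{O(N)}$ new tetrahedra and writes $S^3$ as two triangulated 3-balls glued along $\Sigma$. Each 3-ball can then be reduced to a trivial ball by Pachner moves (after sufficient subdivision it becomes shellable/collapsible, and shellings and collapses are realizable by Pachner moves), and the two reduced balls reassemble to the canonical $S^3$. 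Iterating this over the $O(n)$ ``units of complexity'' of the sweepout, each stage multiplying the working size by a factor $2^{O(n)}$ and costing a correspondingly bounded number of moves, accumulates to $2^{O(n^2)}$ moves overall; checking exactly how the constants compound to the stated bound $6\cdot 10^6\,n^2\,2^{2\cdot 10^4 n^2}$ is then routine bookkeeping.

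The main obstacle, and the reason the bound is so enormous, is the normalization step. A priori an isotopy carrying $\Sigma$ into normal position can make $\tri$ arbitrarily complicated, so the crux is a pair of quantitative facts: that each elementary normalizing move (pushing the surface across a single tetrahedron, or an innermost-disc compression) costs only $O(1)$ Pachner moves on a locally subdivided patch, and that the \emph{total} number of such moves is controlled by the Kneser--Haken / normal-surface finiteness bounds, i.e.\ is exponential in $N$ and no worse. Establishing these with explicit constants, and bounding the outer induction so that the per-stage $2^{O(n)}$ blowups compound only to $2^{O(n^2)}$ rather than to a tower, is where essentially all the work lies; the rest is the bounded-cost simulation of subdivisions, shellings, and collapses by Pachner moves.
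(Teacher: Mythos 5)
This statement is not proved in the paper at all: Theorem~\ref{t-mij} is quoted verbatim from Mijatovi{\'c}'s paper \cite{mijatovic03-simplifying} and is used here only as an external benchmark against which the experimental bounds of Section~\ref{s-analysis} are compared. So there is no in-paper argument to measure your reconstruction against; what can be assessed is whether your outline would itself constitute a proof of the stated bound, and it does not. Your sketch captures the general shape of the published argument (reduce to the simplicial setting, use normal/almost-normal sphere theory to find and normalize a splitting 2-sphere with exponentially bounded weight, iterate, and let the per-stage exponential losses compound to $2^{O(n^2)}$), but every step that actually carries the quantitative content of the theorem is asserted rather than established. The claims that each elementary normalization step costs $O(1)$ Pachner moves, that the total number of such steps is only exponential in $n$, that a triangulated $3$-ball can be made shellable or collapsible ``after sufficient subdivision'' at a cost you can control (non-shellable balls exist, and the number of subdivisions needed is exactly the kind of quantity that could destroy the bound), and that the outer induction compounds to $2^{O(n^2)}$ rather than to an iterated exponential are precisely the hard lemmas; you explicitly defer them (``where essentially all the work lies''), and the explicit constants $6\cdot 10^6$ and $2\cdot 10^4$ are not ``routine bookkeeping'' on top of an $O$-estimate --- they are the theorem.

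In short: as a guide to where Mijatovi{\'c}'s difficulty lies, your summary is reasonable, but as a proof of the statement it has a genuine gap at every quantitative joint, and no amount of polishing the qualitative skeleton (which is essentially Theorem~\ref{t-connected}) yields the explicit super-exponential bound. If you need this result, the honest course is the one the paper takes: cite \cite{mijatovic03-simplifying}. If you intend to reprove it, you must supply explicit Pachner-move costs for (i) simulating barycentric subdivision, (ii) each normalization/isotopy move of the sphere across a tetrahedron together with an explicit bound on how many such moves occur (this is where the weight bounds on vertex normal surfaces enter with concrete constants), and (iii) the reduction of the resulting balls, and then carry the constants through the induction.
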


\begin{corollary}
    In the Pachner graph of the 3-sphere, from any node at level $n>2$
    there is a simplification path of length
    less than $6 \cdot 10^6 n^2 2^{2 \cdot 10^4 n^2}$ and excess height
    less than $3 \cdot 10^6 n^2 2^{2 \cdot 10^4 n^2}$.
\end{corollary}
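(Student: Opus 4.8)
The plan is to extract both bounds directly from Theorem~\ref{t-mij}. The length bound is essentially immediate: Mijatović's canonical triangulation of the 3-sphere has exactly two tetrahedra, hence corresponds to a node at level $2$ of the Pachner graph. Given any node at level $n > 2$, represented by a triangulation $\tri$, Theorem~\ref{t-mij} supplies a sequence of fewer than $K := 6 \cdot 10^6 n^2 2^{2 \cdot 10^4 n^2}$ Pachner moves carrying $\tri$ to this canonical triangulation; since $2 < n$, the corresponding walk in $\pg{S^3}$ runs from level $n$ down to a strictly lower level, and so is a simplification path with fewer than $K$ arcs. That settles the length bound (and if the walk happens to repeat a node we may shortcut it, which only decreases both length and maximum level, though we need not).

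For the excess height I would track the level along this same path. Write it as $\tri = \tri_0, \tri_1, \ldots, \tri_m$ with $m < K$ and $\tri_m$ the canonical triangulation, and let $\ell_j$ be the level of $\tri_j$, so $\ell_0 = n$ and $\ell_m = 2$. Let $L = \max_j \ell_j$, attained first at some step $s$ and last at some step $t$ (so $s \le t$). Along $\tri_0, \ldots, \tri_s$ the level rises from $n$ to $L$, and along $\tri_t, \ldots, \tri_m$ it drops from $L$ to $2$; since each 2-3 or 3-2 move changes the level by exactly $1$, the first stretch uses at least $L - n$ moves and the second at least $L - 2$, and these two sets of moves are disjoint, so $m \ge (L - n) + (L - 2) = 2L - n - 2$. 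Therefore $L \le (m + n + 2)/2$, and the excess height $h = L - n$ satisfies
\[
    h \;\le\; \frac{m - n + 2}{2} \;<\; \frac{m}{2} \;<\; \frac{K}{2} \;=\; 3 \cdot 10^6 n^2 2^{2 \cdot 10^4 n^2},
\]
which is exactly the claimed bound.

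The one point requiring care — and the only real obstacle I foresee — is that a general Pachner move need not change the level by $1$: a 1-4 or 4-1 move changes it by $3$, which would only yield $h < 3K/2$ from the counting above. I would resolve this by examining Mijatović's construction in \cite{mijatovic03-simplifying} and confirming either that the path to the canonical triangulation can be taken to use only 2-3 and 3-2 moves, or (more weakly) that its 1-4 and 4-1 moves can be confined to a bounded initial and final stretch, so that they raise the maximum level by only $O(1)$ and do not disturb the leading constant. Everything else is elementary bookkeeping; nothing about the structure of the Pachner graph itself is needed.
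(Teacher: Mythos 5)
Your approach is the same as the paper's: the corollary is stated there without any written proof, as an immediate consequence of Theorem~\ref{t-mij}, and the intended derivation is exactly your bookkeeping---read the length bound straight off Mijatovi{\'c}'s theorem, and bound the excess height by roughly half the length via the up-then-down count along the same move sequence. Your length argument is complete (including the remark about shortcutting repeated nodes). The caveat you flag, however, is genuine and is the only substantive issue: Theorem~\ref{t-mij} as quoted allows all four Pachner moves, and a 1-4 or 4-1 move shifts the level by $3$, so the unconditional counting argument yields only excess height less than $\tfrac{3}{2}K = 9 \cdot 10^6 n^2 2^{2\cdot 10^4 n^2}$ rather than the stated $\tfrac{1}{2}K = 3 \cdot 10^6 n^2 2^{2\cdot 10^4 n^2}$; recovering the printed constant really does require, as you say, inspecting Mijatovi{\'c}'s construction (or settling for the weaker constant, which changes nothing qualitative, since the corollary is only quoted to contrast super-exponential theory with the experiments). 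The paper never addresses this point---no proof of the corollary appears---so your write-up, which isolates exactly where the constant depends on which moves occur, is if anything more careful than the source; just be aware that as written your proof establishes the $3\cdot 10^6$ constant only conditionally, and unconditionally proves the bound with $9\cdot 10^6$ in its place.
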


In the \emph{restricted} Pachner graph, no explicit bounds on these
quantities are known at all.

%
%

\section{Key tools} \label{s-tools}

Experimental studies of the Pachner graph are difficult: the graph
itself is infinite, and even the finite level sets grow
super-exponentially in size.  By working with isomorphism classes of
triangulations, we keep the level sets considerably smaller than if
we had used labelled triangulations instead.
However, the trade-off is that both the nodes and the arcs of the
graph are more difficult to construct.

In this section we outline two key algorithmic tools for studying the
Pachner graph: a \emph{census of triangulations} (which enumerates the
nodes at each level), and polynomial-time computable
\emph{isomorphism signatures} (which allow us to construct the arcs).

\subsection{A census of triangulations} \label{s-tools-census}

To enumerate the nodes of Pachner graphs, we build a census of all
3-manifold triangulations of size $n \leq 9$, with each triangulation
included precisely once up to isomorphism.  Because we are particularly
interested in one-vertex triangulations as well as triangulations of the
3-sphere, we extract such
triangulations into separate censuses with the help of the highly optimised
3-sphere recognition algorithm described in \cite{burton10-quadoct}.
The final counts are summarised in Table~\ref{tab-census}.

\begin{table}[htb]
\centering
\small
\begin{tabular}{c|r|r|r|r}
Number of & \multicolumn{2}{c|}{All closed 3-manifolds} &
\multicolumn{2}{c}{3-spheres only} \\
tetrahedra &
\multicolumn{1}{c|}{No constraints} &
\multicolumn{1}{c|}{1-vertex only} &
\multicolumn{1}{c|}{No constraints} &
\multicolumn{1}{c}{1-vertex only} \\
\hline
1   &                4 &                3 &             2 &             1 \\
2   &               17 &               12 &             6 &             3 \\
3   &               81 &               63 &            32 &            20 \\
4   &              577 &              433 &           198 &           128 \\
5   &           5\,184 &           3\,961 &        1\,903 &        1\,297 \\
6   &          57\,753 &          43\,584 &       19\,935 &       13\,660 \\
7   &         722\,765 &         538\,409 &      247\,644 &      169\,077 \\
8   &      9\,787\,509 &      7\,148\,483 &   3\,185\,275 &   2\,142\,197 \\
9   &    139\,103\,032 &     99\,450\,500 &  43\,461\,431 &  28\,691\,150 \\
\hline
Total & 149\,676\,922 & 107\,185\,448 & 46\,916\,426 & 31\,017\,533
\end{tabular}
\caption{Counts for 3-manifold triangulations of various types in the census}
\label{tab-census}
\end{table}

The algorithms behind this census are sophisticated; see
\cite{burton07-nor10} for some of the techniques involved.
The constraint that the triangulation must represent a 3-manifold is
critical: if we just enumerate all
pairwise identifications of faces up to isomorphism, there are at least
\[ \frac{[(4n-1)\times(4n-3)\times\cdots\times3\times1]\cdot6^{2n}}
    {n! \cdot 24^n} \quad \simeq \quad 2.35 \times 10^{16} \]
possibilities for $n=9$.  To enforce the 3-manifold constraint we use a
modified union-find algorithm that tracks
partially-constructed edge links and vertex links; see
\cite{burton07-nor10} for details.

Even with this constraint, we can prove that the census grows at a
super-exponential rate:

\setcounter{ctr-numvert}{\arabic{theorem}}
\begin{theorem} \label{t-numvert}
    The number of distinct isomorphism classes of 3-manifold
    triangulations of size $n$ grows at an asymptotic rate of
    $\exp(\Theta(n\log n))$.
\end{theorem}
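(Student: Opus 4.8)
The plan is to establish matching bounds $\exp(O(n\log n))$ and $\exp(\Omega(n\log n))$ on the number $t(n)$ of isomorphism classes of 3-manifold triangulations of size $n$.

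The upper bound is pure overcounting. Every triangulation of size $n$ arises by choosing a partition of the $4n$ tetrahedron faces into $2n$ matched pairs --- of which there are $(4n-1)!!=(4n-1)(4n-3)\cdots 3\cdot 1$ --- and then, for each pair, one of the $6$ affine identifications of a triangle with a triangle. Hence $t(n)\le (4n-1)!!\cdot 6^{2n}$, since 3-manifold triangulations are a subset of all such face-pairings and passing to isomorphism classes only decreases the count. By Stirling's formula $(4n-1)!!=\exp(\Theta(n\log n))$ while $6^{2n}=\exp(O(n))$, so $t(n)=\exp(O(n\log n))$. (This is essentially the displayed estimate already appearing before the theorem.)

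The substance is the lower bound $t(n)=\exp(\Omega(n\log n))$, which I would prove by exhibiting an explicit family of pairwise non-isomorphic triangulations. The guiding principle is that a triangulation need not embed in any ambient space, so we may prescribe its face-pairings freely; and the $\log n$ factor in the exponent means we must encode a combinatorial object whose number of isomorphism classes is superlinear-exponential in its ``description size'' --- the natural candidate being the $3$-regular multigraphs on $m$ vertices, of which there are $\exp(\Theta(m\log m))$ up to isomorphism (by the configuration-model count $\sim (3m)!/((3m/2)!\,2^{3m/2}6^m)$, divided by at most $m!$ for relabelling). For each such graph $G$ on $m$ vertices I would build a closed 3-manifold triangulation $T_G$ of size $O(m)$: replace each vertex of $G$ by a fixed constant-size triangulated 3-ball whose boundary sphere is carved into three triangulated ``ports'', and each edge of $G$ by identifying a port at one endpoint with a port at the other. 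Gluing these balls along the edge pattern of $G$ yields a triangulated handlebody (a regular neighbourhood of $G$) of genus $\tfrac m2+1$; capping it off canonically --- for instance by doubling along its boundary surface, giving a triangulation of $\#^{m/2+1}(S^2\times S^1)$ --- produces a closed triangulation $T_G$ with $O(m)$ tetrahedra. One checks $T_G$ is a genuine 3-manifold triangulation (all vertex links are spheres, since it is assembled from balls glued along boundary disks and then doubled), and that $G$ is recoverable up to isomorphism from $T_G$: with the gadget made rigid enough (a distinctive, asymmetric triangulation pattern, ports individually recognisable inside $T_G$), the decomposition of $T_G$ into gadgets is essentially unique, so $T_G\cong T_{G'}$ forces $G\cong G'$; it in fact suffices that the fibres of $G\mapsto T_G$ have size $2^{O(m)}$. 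Counting then gives $\exp(\Omega(m\log m))$ non-isomorphic triangulations of size $n=O(m)$, i.e.\ $t(n)=\exp(\Omega(n\log n))$; a routine padding argument (attaching a bounded cluster of extra tetrahedra to hit intermediate values of $n$) removes the restriction that $n$ lie in the range of the size function.

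The main obstacle is the recoverability step. I need gadgets that simultaneously (i) always assemble into a closed 3-manifold, (ii) cost only $O(1)$ tetrahedra each, so the total stays \emph{linear} in $m$ --- if encoding a single vertex cost $\Theta(\log m)$ tetrahedra the whole argument would degrade to the merely exponential $\exp(\Theta(n))$ --- and (iii) are rigid enough that the isomorphism type of $G$ is an invariant of the isomorphism type of $T_G$. Requirements (ii) and (iii) pull against each other, since rigidity wants asymmetry and asymmetry tends to cost tetrahedra, and reconciling them --- while ruling out ``accidental'' isomorphisms between different $T_G$ --- is where the real work lies. The remaining ingredients (the Stirling estimates, the count of $3$-regular graphs, and the verification that $T_G$ is a manifold triangulation) are routine.
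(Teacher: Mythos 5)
Your upper bound is exactly the paper's, and your lower-bound strategy (encode a family of $\exp(\Theta(m\log m))$ combinatorial objects into triangulations at $O(1)$ tetrahedra per object, then recover the object from the triangulation up to $\exp(O(m))$ ambiguity) is structurally the same idea the paper uses: the paper encodes closed 2-manifold triangulations on $2m$ triangles (of which there are $\exp(\Omega(m\log m))$ isomorphism classes, by an easy matching count, since \emph{every} pairwise identification of triangle edges yields a closed surface) and fattens each into a $32m$-tetrahedron triangulation of $F \times S^1$ built from prisms and pillows. However, your write-up has a genuine gap precisely where you admit ``the real work lies'': you never construct the constant-size three-port gadget, and you never prove that the gadget decomposition of $T_G$ is recognisable up to isomorphism (equivalently, that the fibres of $G \mapsto T_G$ have size $\exp(O(m))$). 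This is not a routine detail. All your triangulations of a given genus represent the same manifold $\#^{m/2+1}(S^2 \times S^1)$, so the topology carries essentially no information and everything rests on the combinatorial rigidity of a gadget you have not exhibited; designing one that is simultaneously $O(1)$-sized, always assembles into a closed 3-manifold triangulation, and admits no accidental isomorphisms between different $T_G$ is exactly the content of the lower bound. As it stands, the argument is a plausible programme rather than a proof.

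For comparison, the paper resolves the recoverability problem with a cheap local certificate instead of global rigidity: each triangle of the surface is capped by a two-tetrahedron pillow whose internal vertex meets only two tetrahedra, and no other vertex of the construction has this property; hence any isomorphism between fattened triangulations must map pillows to pillows, and flattening the pillows yields an isomorphism of the underlying surface triangulations. An analogous device (a uniquely-low-degree vertex planted in each gadget, with the ports and their pairing readable from that local pattern) would likely rescue your construction, but it must actually be built and verified. Two smaller points: you should restrict to connected cubic multigraphs, since the paper assumes all triangulations are connected (the count remains $\exp(\Theta(m\log m))$); and your ``attach a bounded cluster of extra tetrahedra'' padding needs care because a closed triangulation has no free faces to attach along --- the paper instead proves $V_{n+1} \geq V_n/18$ by comparing the at least $n-1$ available 2-3 moves out of a size-$n$ triangulation with the at most $6(n+1)$ possible 3-2 moves into it.
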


The proof is detailed, and is given in the appendix.

For the largest case $n=9$, the enumeration of all 3-manifold
triangulations up to isomorphism
required $\sim 85$ days of CPU time as measured on a
single 1.7~GHz IBM Power5 processor (though in reality this was reduced to
2--3 days of wall time using 32 CPUs in parallel).
The time required to extract all 3-sphere triangulations
from this census was negligible in comparison.

\subsection{Isomorphism signatures}

To construct arcs of the Pachner graph, we begin at a node---that is,
a 3-manifold triangulation $\tri$---and perform Pachner moves.  Each Pachner
move results in a new triangulation $\tri'$, and our main difficulty
is in deciding which node of the Pachner graph represents $\tri'$.

A na\"ive approach might be to search through nodes at the appropriate
level of the Pachner graph and test each corresponding triangulation
for isomorphism with $\tri'$.  However, this approach is infeasible:
although isomorphism testing is fast (as we prove below), the sheer number
of nodes at level $n$ of the graph is too large
(see Theorem~\ref{t-numvert}).

What we need is a property of the triangulation $\tri'$ that is easy to
compute, and that uniquely defines the isomorphism class of $\tri'$.
This property could be used as the key in a data structure with fast
insertion and fast lookup (such as a hash table or a red-black tree),
and by computing this property we could quickly jump to
the relevant node of the Pachner graph.

Here we define such a property, which we call the \emph{isomorphism
signature} of a triangulation.
In Theorem~\ref{t-sig-unique} we show that isomorphism signatures do
indeed uniquely define isomorphism classes, and in
Theorem~\ref{t-sig-fast} we show that they are small to store
and fast to compute.

A \emph{labelling} of a triangulation of size $n$ involves:
(i)~numbering its tetrahedra from 1 to $n$ inclusive, and
(ii)~numbering the four vertices of each tetrahedron from 1 to 4
inclusive.  We also label the four faces of each tetrahedron from 1 to 4
inclusive so that face $i$ is opposite vertex $i$.
A key ingredient of isomorphism signatures is
\emph{canonical labellings}, which we define as follows.

\begin{defn}[Canonical labelling]
    Given a labelling of a triangulation of size $n$,
    let $A_{t,f}$ denote the tetrahedron which is glued to face
    $f$ of tetrahedron $t$ (so that $A_{t,f} \in \{1,\ldots,n\}$ for
    all $t=1,\ldots,n$ and $f=1,\ldots,4$).  The labelling is
    \emph{canonical} if, when we write out the sequence
    $A_{1,1},A_{1,2},A_{1,3},A_{1,4},\allowbreak A_{2,1},\ldots,A_{n,4}$,
    the following properties hold:
    \begin{enumerate}[(i)]
        \item For each $2 \leq i < j$,
        tetrahedron $i$ first appears before tetrahedron $j$
        first appears.
        \item For each $i \geq 2$, suppose tetrahedron $i$ first appears
        as the entry $A_{t,f}$.  Then the corresponding gluing
        uses the \emph{identity map}:
        face $f$ of tetrahedron $t$ is glued to face $f$ of tetrahedron $i$
        so that vertex $v$ of tetrahedron $t$ maps to vertex $v$ of
        tetrahedron $i$ for each $v \neq f$.
    \end{enumerate}
\end{defn}

As an example, consider the triangulation of size $n=3$ described by
Table~\ref{tab-gluings}.  This table lists the precise gluings of
tetrahedron faces.  For instance, the second cell in the bottom row
indicates that face~2 of tetrahedron~3 is glued to tetrahedron~2, in
such a way that
vertices $1,3,4$ of tetrahedron~3 map to vertices $4,2,3$ of tetrahedron~2
respectively.
This same gluing can be seen from the other direction by examining
the first cell in the middle row.

\begin{table}[htb]
    \newcommand{\gap}{\hspace{2ex}}
    \centering
    \small
    \begin{tabular}{l|c|c|c|c}
    &
    \multicolumn{1}{c|}{Face 1} &
    \multicolumn{1}{c|}{Face 2} &
    \multicolumn{1}{c|}{Face 3} &
    \multicolumn{1}{c}{Face 4} \\
    & Vertices 234 & Vertices 134 & Vertices 124 & Vertices 123 \\
    \hline
    {Tet.\ 1} & Tet.\ 1:\gap231 & Tet.\ 2:\gap134 &
                Tet.\ 3:\gap124 & Tet.\ 1:\gap423 \\
    {Tet.\ 2} & \framebox{Tet.\ 3:\gap341} & Tet.\ 1:\gap134 &
                Tet.\ 2:\gap123 & Tet.\ 2:\gap124 \\
    {Tet.\ 3} & Tet.\ 3:\gap123 & \framebox{Tet.\ 2:\gap423} &
                Tet.\ 1:\gap124 & Tet.\ 3:\gap234
    \end{tabular}
    \caption{The tetrahedron face gluings for an example 3-tetrahedron
        triangulation}
    \label{tab-gluings}
\end{table}

It is simple to see that the labelling for this triangulation is canonical.
The sequence $A_{1,1},\ldots,A_{n,4}$ is
$1,2,3,1,\allowbreak 3,1,2,2,\allowbreak 3,2,1,3$
(reading tetrahedron numbers from left to right and then top to bottom
in the table), and tetrahedron~2 first appears before
tetrahedron~3 as required.  Looking closer,
the first appearance of tetrahedron~2 is in the second cell of the top
row where vertices $1,3,4$ map to $1,3,4$, and the first appearance of
tetrahedron~3 is in the following cell where vertices $1,2,4$ map to
$1,2,4$.  In both cases the gluings use the identity map.

\setcounter{ctr-can-fast}{\arabic{theorem}}
\begin{lemma} \label{l-can-fast}
    For any triangulation $\tri$ of size $n$, there are precisely
    $24n$ canonical labellings of $\tri$, and these can be enumerated in
    $O(n^2\log n)$ time.
\end{lemma}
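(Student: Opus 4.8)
The plan is to show first that every triangulation admits at least one canonical labelling, then that the number of such labellings is exactly $24n$, and finally to describe an algorithm that enumerates them all within the stated time bound. The key observation is that a canonical labelling is completely determined by two choices: which tetrahedron is labelled $1$, and how its four vertices are numbered. There are $n$ choices for the first tetrahedron and $24$ ways to order its vertices, giving $24n$ candidate starting points. The claim is that each such choice extends to a \emph{unique} canonical labelling of the remaining tetrahedra, and that every canonical labelling arises this way; this immediately yields the count $24n$.

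For existence and uniqueness of the extension, I would argue as follows. Fix the label and vertex-numbering of tetrahedron $1$. Now process the entries $A_{1,1},A_{1,2},\ldots$ in order. Whenever we encounter a face glued to an as-yet-unlabelled tetrahedron, property~(i) forces that tetrahedron to receive the next available label (one more than the largest label used so far), and property~(ii) forces its vertex numbering: the gluing must be the identity on the shared face, which pins down three of the four vertex labels of the new tetrahedron, and the fourth is then the one opposite the shared face. So at each step there is no freedom at all — the labelling is built deterministically. Connectedness of the triangulation guarantees that every tetrahedron is eventually reached (since if some tetrahedron were never labelled, the labelled region would be a nonempty proper subset with no faces glued to the outside, contradicting connectedness). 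Conversely, any canonical labelling clearly restricts to \emph{some} choice of first tetrahedron and vertex ordering, and by the uniqueness just shown it must coincide with the labelling generated from that choice. Hence the canonical labellings are in bijection with the $24n$ starting choices.

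For the enumeration and the running time: for each of the $24n$ starting choices, run the deterministic extension procedure above. Each such run is essentially a breadth-first (or depth-first) traversal of the $2n$ face-pairings, and at each face we perform $O(1)$ work to read off the forced vertex labels (composing a constant-size permutation) — except that to output or store the resulting labelled gluing data we must list $O(n)$ entries, each of which is a tetrahedron index of size $O(\log n)$ bits, so a single run costs $O(n\log n)$. Multiplying by the $24n$ starting choices gives $O(n^2\log n)$ total, as claimed. I would remark that the traversal order on faces should itself be canonical (e.g.\ always expand from the lowest-numbered unprocessed face of the lowest-numbered labelled tetrahedron) so that the same starting choice always produces the same labelling.

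The main obstacle is making the uniqueness argument for the extension fully rigorous: one must check carefully that property~(ii)'s requirement of an identity gluing is consistent — i.e.\ that when a new tetrahedron first appears, the three shared vertices really do determine a valid vertex numbering, and that no later constraint can contradict an earlier forced choice. This is where the bookkeeping with face-labelling conventions (face $i$ opposite vertex $i$) matters, and where one has to verify that "the next entry that introduces a new tetrahedron" is well-defined and that the induced order on tetrahedra is the one property~(i) demands. Everything else — the count $24n$ and the time bound — then follows mechanically.
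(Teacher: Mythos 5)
Your proposal is correct and follows essentially the same route as the paper: fix the label and vertex numbering of tetrahedron~1 ($24n$ choices), show the rest of the canonical labelling is forced by a single deterministic pass through the faces (with connectedness guaranteeing every tetrahedron is reached), and charge $O(n\log n)$ per pass for the $O(\log n)$-bit tetrahedron labels, giving $O(n^2\log n)$ overall. The ``consistency'' worry you flag is harmless, since the canonicity conditions only constrain each tetrahedron's first appearance and your construction assigns its labels exactly once, precisely so that the identity-gluing condition holds there.
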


\begin{proof}
    In summary, we can choose any of the $n$ tetrahedra to label as
    tetrahedron~1, and we can choose any of the $4!=24$ labellings of its
    four vertices.  From here the remaining labels
    are forced, and can be deduced in $O(n\log n)$ time.
    The full proof is given in the appendix.
\end{proof}

\begin{defn}[Isomorphism signature]
    For any triangulation $\tri$ of size $n$,
    enumerate all $24n$ canonical labellings of $\tri$,
    and for each canonical labelling encode the full set of
    face gluings as a sequence of bits.
    We define the \emph{isomorphism signature} to be the
    lexicographically smallest of these $24n$ bit sequences, and we
    denote this by $\sig(\tri)$.
\end{defn}

To encode the full set of face gluings for a canonical labelling, we
could simply convert a table of gluing data (such as Table~\ref{tab-gluings})
into a sequence of bits.  For practical implementations we use a more
compact representation, which will be described
in the full version of this paper.

\begin{theorem} \label{t-sig-unique}
    Given two 3-manifold triangulations $\tri$ and $\tri'$,
    we have $\sig(\tri) = \sig(\tri')$ if and only if
    $\tri$ and $\tri'$ are isomorphic.
\end{theorem}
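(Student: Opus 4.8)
The plan is to prove both implications of the biconditional, with the ``if'' direction being the substantive one.

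First I would handle the easy direction: if $\tri$ and $\tri'$ are isomorphic, then $\sig(\tri) = \sig(\tri')$. The key observation here is that any isomorphism $\phi: \tri \to \tri'$ induces a bijection between labellings of $\tri$ and labellings of $\tri'$: given a labelling $\lambda$ of $\tri$, compose with $\phi$ to obtain a labelling $\lambda'$ of $\tri'$. I would check that this bijection sends canonical labellings to canonical labellings --- this is immediate, since the defining properties (i) and (ii) of a canonical labelling refer only to the combinatorial sequence $A_{1,1},\ldots,A_{n,4}$ and to which gluings use the identity map, and these data are identical for $(\tri,\lambda)$ and $(\tri',\lambda')$. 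Consequently the two multisets of $24n$ bit sequences coincide, so their lexicographic minima coincide, giving $\sig(\tri)=\sig(\tri')$. (This also tacitly uses Lemma~\ref{l-can-fast} to know there are exactly $24n$ of them on each side, though really only the bijection is needed.)

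For the harder direction, suppose $\sig(\tri) = \sig(\tri')$. Then in particular $\tri$ and $\tri'$ have the same size $n$ (the bit length encodes $n$, or one can recover $n$ from the structure of the sequence), and there is a canonical labelling $\lambda$ of $\tri$ and a canonical labelling $\lambda'$ of $\tri'$ whose encoded gluing data --- as bit sequences --- are literally equal. The crux is that a canonical labelling is completely reconstructible from its bit sequence: the sequence $A_{1,1},\ldots,A_{n,4}$ together with the vertex permutation attached to each gluing determines the triangulation up to the chosen labelling, and property (ii) guarantees that no information is lost at the ``first appearances'' (the identity map is used there, so nothing extra need be recorded, but it is still determined). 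Hence from the common bit sequence we reconstruct a single labelled triangulation, which must be $(\tri,\lambda)$ read one way and $(\tri',\lambda')$ read the other; the map sending tetrahedron $t$ and vertex $v$ under $\lambda$ to tetrahedron $t$ and vertex $v$ under $\lambda'$ is then an isomorphism $\tri \to \tri'$.

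The main obstacle is making precise the claim that ``the bit sequence determines the labelled triangulation,'' i.e.\ that the encoding is injective on canonical labellings. I would address this by describing the decoding procedure explicitly: read off $A_{1,1}, A_{1,2}, \ldots$ in order; whenever an entry $A_{t,f}$ is the first occurrence of a tetrahedron index $i \ge 2$, property~(ii) tells us the gluing is the identity map, so we record it and move on; whenever $A_{t,f}$ refers to an already-seen tetrahedron, the accompanying vertex-permutation bits (also part of the encoding) specify the gluing completely. One must check consistency --- that reading the same face pair from both sides yields compatible data --- but this is automatic because the encoding was produced from an actual triangulation. I would also remark that property~(i) is what makes the ``first appearance'' rule unambiguous, ensuring the decoding is deterministic. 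The remaining details (that distinct canonical labellings of the \emph{same} triangulation can still yield distinct bit sequences, which is fine, and that taking the lexicographic minimum is well-defined) are routine and can be relegated to the appendix.
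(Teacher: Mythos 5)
Your proof is correct and follows essentially the same route as the paper: the paper likewise notes that equal signatures encode the same gluing data (hence the triangulations are isomorphic), and that an isomorphism identifies the two sets of $24n$ canonical labellings, so the lexicographic minima agree. You simply spell out the reconstruction-from-the-encoding step that the paper dismisses as clear.
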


\begin{proof}
    It is clear that $\sig(\tri) = \sig(\tri')$ implies that
    $\tri$ and $\tri'$ are isomorphic, since both signatures encode the
    same gluing data.  Conversely, if $\tri$ and $\tri'$ are isomorphic
    then their $24n$ canonical labellings are the same (though they
    might be enumerated in a different order).  In particular, the
    lexicographically smallest canonical labellings will be identical;
    that is, $\sig(\tri)=\sig(\tri')$.
\end{proof}

\begin{theorem} \label{t-sig-fast}
    Given a 3-manifold triangulation $\tri$ of size $n$,
    the isomorphism signature $\sig(\tri)$ has $O(n\log n)$ size
    and can be generated in $O(n^2\log n)$ time.
\end{theorem}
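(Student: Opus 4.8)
The plan is to bound the size and computation time of $\sig(\tri)$ by analysing the three stages of its construction: enumerating canonical labellings, encoding each one as a bit sequence, and taking the lexicographic minimum. By Lemma~\ref{l-can-fast} there are exactly $24n$ canonical labellings, and all of them can be enumerated in $O(n^2\log n)$ time, so the first stage is already within budget; what remains is to control the cost of encoding and comparing.

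First I would bound the size of a single encoded labelling. For a fixed canonical labelling, the gluing data consists, for each of the $n$ tetrahedra and each of its $4$ faces, of the index $A_{t,f} \in \{1,\ldots,n\}$ of the adjacent tetrahedron together with the combinatorial gluing permutation (one of $24$ possibilities, encodable in a constant number of bits). Writing each tetrahedron index in binary costs $O(\log n)$ bits, and there are $4n$ faces, so each encoded labelling has size $O(n\log n)$ bits. Since $\sig(\tri)$ is by definition one of these bit sequences, it too has $O(n\log n)$ size, which establishes the first claim. (One should remark that the exact encoding used in practice is more compact, as noted after the definition, but the crude bound above already suffices.)

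Next I would bound the running time. Enumerating the $24n$ canonical labellings takes $O(n^2\log n)$ time by Lemma~\ref{l-can-fast}. For each canonical labelling, writing out its encoding takes time proportional to its length, i.e.\ $O(n\log n)$ per labelling, for a total of $O(n^2\log n)$ over all $24n$ of them. Finally, selecting the lexicographically smallest among $24n$ bit sequences, each of length $O(n\log n)$, can be done by $O(n)$ pairwise comparisons, each costing $O(n\log n)$ time in the worst case — again $O(n^2\log n)$ in total. Summing the three stages gives an overall bound of $O(n^2\log n)$, as claimed.

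The main technical point to be careful about — and the step I would flag as the real content — is that the per-labelling work is genuinely $O(n\log n)$ and not larger: this relies on the observation from Lemma~\ref{l-can-fast} that once tetrahedron~1 and the ordering of its vertices are chosen, the entire rest of the labelling (tetrahedron indices and all gluing permutations) is forced and can be read off in $O(n\log n)$ time, so we are not paying a hidden cost to reconstruct the full gluing table from scratch for each of the $24n$ choices. Granting that, the rest is routine bookkeeping, and the full details can be deferred to the appendix.
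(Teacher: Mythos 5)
Your proposal is correct and follows essentially the same route as the paper: bound the size of a single encoded gluing table by $O(n\log n)$, invoke Lemma~\ref{l-can-fast} for the $O(n^2\log n)$ enumeration of the $24n$ canonical labellings, and charge $O(n\log n)$ per labelling for encoding and lexicographic comparison. Your version merely spells out the bookkeeping (per-labelling encoding cost and the pairwise-minimum step) in slightly more detail than the paper does.
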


\begin{proof}
    To encode a full set of face gluings, at worst we require a table
    of gluing data such as Table~\ref{tab-gluings}, with $4n$ cells each
    containing four integers.
    Because some of these integers require $O(\log n)$ bits
    (the tetrahedron labels), it follows that the total size of
    $\sig(\tri)$ is $O(n \log n)$.

    The algorithm to generate $\sig(\tri)$ is spelled out
    explicitly in its definition.  The $24n$ canonical labellings of
    $\tri$ can be enumerated in $O(n^2\log n)$ time (Lemma~\ref{l-can-fast}).
    Because a full set of face gluings has size $O(n\log n)$,
    we can encode the $24n$ bit sequences and select the
    lexicographically smallest in $O(n^2\log n)$ time,
    giving a time complexity of $O(n^2\log n)$ overall.
\end{proof}

This space complexity of $O(n\log n)$ is the best we can hope
for, since Theorem~\ref{t-numvert} shows that the number of distinct
isomorphism signatures for size $n$ triangulations grows like
$\exp(\Theta(n \log n))$.

It follows from Theorems~\ref{t-sig-unique} and~\ref{t-sig-fast}
that isomorphism signatures are ideal tools for constructing arcs in the
Pachner graph, as explained at the beginning of this section.
Moreover, the relevant definitions and results are easily extended
to bounded and ideal triangulations (which are beyond the scope of this paper).
We finish with a simple but important consequence of our results:

\begin{corollary}
    Given two 3-manifold triangulations $\tri$ and $\tri'$ each of size $n$,
    we can test whether $\tri$ and $\tri'$ are isomorphic in
    $O(n^2\log n)$ time.
\end{corollary}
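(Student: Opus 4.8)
The plan is to treat the isomorphism signature as a complete invariant: by Theorem~\ref{t-sig-unique}, two 3-manifold triangulations are isomorphic if and only if their signatures coincide, so the isomorphism test reduces to computing $\sig(\tri)$ and $\sig(\tri')$ and comparing them as bit strings.

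First I would apply Theorem~\ref{t-sig-fast} twice, once to $\tri$ and once to $\tri'$, obtaining $\sig(\tri)$ and $\sig(\tri')$ in $O(n^2\log n)$ time each. Next, the same theorem guarantees that each of these signatures is a bit string of length $O(n\log n)$, so testing whether $\sig(\tri)=\sig(\tri')$ by a direct lexicographic comparison costs only $O(n\log n)$ time, which is negligible beside the two signature computations. Finally, by Theorem~\ref{t-sig-unique} the answer ``$\tri$ and $\tri'$ are isomorphic'' is correct precisely when the two signatures are equal, so the algorithm outputs the right answer. Adding the costs of the three steps gives $O(n\log n) + O(n^2\log n) + O(n^2\log n) = O(n^2\log n)$, as claimed.

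There is no real obstacle here: the corollary is an immediate bookkeeping consequence of Theorems~\ref{t-sig-unique} and~\ref{t-sig-fast}, and the only thing to verify is that neither the string comparison nor the constant-factor overhead of invoking the signature algorithm twice pushes the running time above $O(n^2\log n)$, which is clear. One could additionally remark that this is exactly the ``canonical-labelling'' refinement of the naive strategy of testing all $24n$ relabellings of $\tri$ for a match against $\tri'$: restricting attention to canonical labellings is what makes the comparison a single string test rather than a search.
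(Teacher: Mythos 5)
Your proposal is correct and matches the paper's intent exactly: the corollary is stated there as an immediate consequence of Theorems~\ref{t-sig-unique} and~\ref{t-sig-fast}, namely compute both signatures in $O(n^2\log n)$ time and compare the two $O(n\log n)$-bit strings. Nothing further is needed.
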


%
%

\section{Analysing the Pachner graph} \label{s-analysis}

As discussed in the introduction, our focus
is on one-vertex triangulations of the 3-sphere.
We therefore direct our attention to $\rpg{S^3}$, the restricted Pachner
graph of the 3-sphere.

In this section we develop algorithms to bound the shortest length and
smallest excess height of any simplification path from a given node at level
$n$ of $\rpg{S^3}$.  By running these algorithms over the full census of
$31\,017\,533$ one-vertex triangulations of the 3-sphere (as described
in Section~\ref{s-tools-census}), we obtain a
computer proof of the following results:

\begin{theorem} \label{t-results}
    From any node at level $n$ of the graph $\rpg{S^3}$
    where $3 \leq n \leq 9$, there is a simplification path of length
    $\leq 13$, and there is a simplification path of excess height $\leq 2$.
\end{theorem}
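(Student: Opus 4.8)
The argument will be a computation: the census of Section~\ref{s-tools-census} supplies the nodes, and isomorphism signatures (Theorems~\ref{t-sig-unique} and~\ref{t-sig-fast}) supply the arcs. The plan is to reduce both assertions to a single family of breadth-first searches inside bounded level sets of $\rpg{S^3}$, one search for each $n \in \{3,\ldots,9\}$, so that a single explored path witnesses both the length bound and the excess-height bound for every starting node at once.

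For a fixed such $n$, I would work inside the finite graph $G_n$: the subgraph of $\rpg{S^3}$ induced by all nodes at levels $1,\ldots,n+2$. Its nodes are exactly the one-vertex $3$-sphere triangulations of size at most $n+2$; for $n \leq 7$ these are precisely the triangulations already tabulated in Table~\ref{tab-census}, whereas for $n = 8,9$ the additional levels $10$ and $11$ must be generated as well. Its arcs I would build constructively: from each triangulation $\tri$, enumerate every admissible 2-3 move (one for each face of the triangulation whose two incident tetrahedra are distinct) and every admissible 3-2 move (one for each degree-$3$ edge whose three incident tetrahedra are distinct), perform the move, and compute $\sig(\tri')$ of the resulting triangulation $\tri'$. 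Since Pachner moves preserve the underlying $3$-manifold and 2-3 and 3-2 moves preserve the number of vertices, $\tri'$ is automatically again a one-vertex $3$-sphere triangulation, so no $3$-sphere recognition is needed inside the search; by Theorem~\ref{t-sig-unique} its signature names the corresponding node of $G_n$. Any move producing more than $n+2$ tetrahedra is discarded, and all others are recorded as arcs.

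I would then run a multi-source breadth-first search in $G_n$ starting simultaneously from every node at a level below $n$. A level-$n$ node $v$ is first reached at depth $d$ exactly when there is a path of length $d$ in $\rpg{S^3}$ from some lower-level node to $v$ that never leaves levels $\leq n+2$; reversing such a path (arcs are undirected) yields a simplification path from $v$ of length $d$ and excess height $\leq 2$, indeed the shortest simplification path from $v$ of excess height $\leq 2$. It then suffices to verify, over this search, that \emph{every} level-$n$ node is reached at all --- a statement strictly stronger than the unconfined connectivity of Theorem~\ref{t-connected} --- and that the largest first-reach depth of a level-$n$ node is at most $13$. Carrying this out for $n = 3,\ldots,9$ and taking the maximum over all seven values would establish both clauses of the theorem, with one and the same path witnessing each bound. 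In practice the search is reorganised as $31\,017\,533$ independent height-$2$-bounded searches, one from each starting node, which are embarrassingly parallel; the polynomial-time signatures of Theorem~\ref{t-sig-fast} act as compact hash keys, so that revisited nodes are detected in $O(n^2\log n)$ time.

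The hard part will be sheer size. By Theorem~\ref{t-numvert} the level sets grow like $\exp(\Theta(n\log n))$, so even the bounded graph $G_9$, which reaches into levels $10$ and $11$, is far too large to hold in memory; the difficulty here is one of scale rather than of principle. The effort goes into generating levels $10$ and $11$ on the fly rather than as a stored census, distributing the search frontier across many processors, and exploiting the empirical observation that the overwhelming majority of starting nodes admit a 3-2 move straight down to level $n-1$ (distance $1$), so that only a comparatively small, hard tail of nodes requires a deep search. Subject to carrying out this computation, the bounds of $\leq 13$ on length and $\leq 2$ on excess height follow.
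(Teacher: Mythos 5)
Your proposal is sound and would indeed prove the theorem, but it is organised quite differently from the paper's computation. The paper splits the two bounds into two separate algorithms: Algorithm~\ref{a-height} starts from \emph{all} of level $n$, sweeps upward one level at a time adding only the 2-3 arcs out of nodes already constructed, and tracks connected components with union--find until a single component remains, at which point Theorem~\ref{t-connected} supplies some level-$n$ node with a downward arc and hence converts connectivity into the height bound $H_n=2$; Algorithm~\ref{a-length} then runs a breadth-first search \emph{across level $n$ only}, seeded at the set $I$ of nodes admitting a 3-2 move, whose steps are ``jumps'' (two 2-3 moves followed by two 3-2 moves), and the bound $13$ is the bookkeeping artifact $4j+1$ with $j\leq 3$, not a measured geodesic. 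You instead propose a single confined multi-source (equivalently, per-source) breadth-first search in the subgraph on levels $\leq n+2$, so that one explored path simultaneously witnesses both bounds and the lengths obtained are true shortest simplification paths of excess height $\leq 2$; this is sharper, avoids the jump restriction, and does not need Theorem~\ref{t-connected} as an ingredient since reachability from below is verified directly. What the paper's organisation buys is resource control, which was the binding constraint: the jump formulation never stores any node above level $n$ (4\,GB at $n=9$), while the height sweep amortises all sources into one union--find pass and still needed a special two-phase variant at $n=9$ to avoid materialising level $11$ ($\sim$50\,GB even so). Your per-source, height-bounded searches keep memory small at the price of redundantly re-exploring overlapping regions of levels $n+1$ and $n+2$, which is plausible but heavier. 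One caveat: your opening framing, in which $G_8$ and $G_9$ are built by generating levels $10$ and $11$ outright, is not viable---the census grows like $\exp(\Theta(n\log n))$ and level $9$ alone took $\sim$85 CPU-days---so only the on-the-fly variant you later describe is workable; and, as in the paper, the facts that every level-$n$ node is reached within height $2$ and within depth $13$ are empirical outcomes of the run rather than anything guaranteed in advance.
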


The bound $3 \leq n$ is required because there are no simplification
paths in $\rpg{S^3}$ starting at level~2 or below (see
Figure~\ref{fig-rpg-s3}).  For $n > 9$
a computer proof becomes computationally infeasible.

The results of Theorem~\ref{t-results} are astonishing, especially in light
of Mijatovi{\'c}'s super-exponential bounds.  Furthermore, whilst it can
be shown that the excess height bound of $\leq 2$ is tight,
the length estimate of
$\leq 13$ is extremely rough: the precise figures could be much
smaller still.  These results have important implications, which we
discuss later in Section~\ref{s-conc}.

In this section we describe the algorithms behind
Theorem~\ref{t-results}, and we present the experimental results
in more detail.  Our algorithms are constrained by the following factors:
\begin{itemize}
    \item Their time and space complexities must be
    close to linear in the number of nodes that they examine,
    due to the sheer size of the census.

    \item They cannot loop through all nodes in $\rpg{S^3}$, since
    the graph is infinite.  They cannot even loop through all nodes
    at level $n \geq 10$, since there are too many to enumerate.

    \item They cannot follow arbitrary breadth-first or depth-first
    searches through $\rpg{S^3}$, since the graph is infinite and
    can branch heavily in the upward direction.\footnote{%
        In general, a node at level $n$ can have up to $2n$
        distinct neighbours at level $(n+1)$.}
\end{itemize}

Because of these limiting factors, we cannot run through the census
and directly measure the shortest length or smallest excess height of any
simplification path from each node.
Instead we develop fast, localised algorithms that allow us to
bound these quantities from above.  To our delight,
these bounds turn out to be extremely effective in practice.
The details are as follows.

\subsection{Bounding excess heights} \label{s-analysis-height}

In this section we compute bounds $H_n$ so that, from every node at
level $n$ of the graph $\rpg{S^3}$, there is some simplification path of
excess height $\leq H_n$.  As in Theorem~\ref{t-results}, we compute these
bounds for each $n$ in the range $3 \leq n \leq 9$.

\begin{algorithm}[Algorithm for computing $H_n$] \label{a-height}
This algorithm runs by progressively building a subgraph $G \subset \rpg{S^3}$.
At all times we keep track of the number of distinct components of $G$
(which we denote by $c$) and the maximum level of any node in $G$
(which we denote by $\ell$).
\begin{enumerate}
    \item Initialise $G$ to all of level $n$ of $\rpg{S^3}$.
    This means that $G$ has no arcs, the number of components $c$ is
    just the number of nodes at level $n$, and the maximum level is
    $\ell = n$.

    \item While $c > 1$, expand the graph as follows:
    \begin{enumerate}[(a)]
        \item Construct all arcs from nodes in $G$ at level $\ell$
        to (possibly new) nodes in $\rpg{S^3}$ at level $\ell+1$.
        Insert these arcs and their endpoints into $G$.

        \item Update the number of components $c$, and increment $\ell$
        by one.
    \end{enumerate}

    \item Once we have $c=1$, output the final bound $H_n = \ell - n$
    and terminate.
\end{enumerate}
\end{algorithm}

In step~2(a) we construct arcs by performing 2-3 moves.
We only construct arcs from nodes \emph{already} in $G$,
which means we only work with a small portion of level $\ell$
for each $\ell > n$.
In step~2(b) we use union-find to update the number
of components in small time complexity.

It is clear that Algorithm~\ref{a-height} is correct for any $n \geq 3$:
once we have $c=1$ the subgraph $G$ is connected, which means there is a
path from any node at level $n$ to any other node at level $n$.
By Theorem~\ref{t-connected} at least one such node allows a 3-2 move,
and so any node at level $n$ has a
simplification path of excess height $\leq \ell$.

However, it is not clear that Algorithm~\ref{a-height} terminates:
it might be that \emph{every} simplification path from some
node at level $n$ passes through nodes that we never construct
at higher levels $\ell > n$.
Happily it does terminate for all
$3 \leq n \leq 9$, giving an output of $H_2 = 2$ each time.
Table~\ref{tab-height} shows how
the number of components $c$ changes throughout the algorithm in each case.

\begin{table}[htb]
\[ \small \begin{array}{l|r|r|r|r|r|r|r}
    \mbox{Input level $n$} & 3 & 4 & 5 & 6 & 7 & 8 & 9 \\
    \hline
    \mbox{Value of $c$ when $\ell = n$} &
    20 & 128 & 1\,297 & 13\,660 & 169\,077 & 2\,142\,197 & 28\,691\,150 \\
    \mbox{Value of $c$ when $\ell = n+1$} &
    8 & 50 & 196 & 1\,074 & 7\,784 & 64\,528 & 557\,428 \\
    \mbox{Value of $c$ when $\ell = n+2$} &
    1 & 1 & 1 & 1 & 1 & 1 & 1 \\
    \hline
    \mbox{Final bound $H_n$} &
    \mathbf{2} & \mathbf{2} & \mathbf{2} & \mathbf{2} & \mathbf{2} &
    \mathbf{2} & \mathbf{2}
\end{array} \]
\caption{Results obtained when running
    Algorithm~\ref{a-height} for $3 \leq n \leq 9$}
\label{tab-height}
\end{table}

It is straightforward to show that the space and time complexities of
Algorithm~\ref{a-height} are linear and log-linear respectively in the number
of nodes in $G$ (other small polynomial factors in $n$ and $\ell$
also appear).
Nevertheless, the memory requirements for $n=8$ were found to be extremely
large in practice ($\sim$29\,GB), and for $n=9$ they were too large for the
algorithm to run (estimated at 400--500\,GB).  In the case of $n=9$ a
\emph{two-phase} approach was necessary:
\begin{enumerate}
    \item Use Algorithm~\ref{a-height} for the transition
    from level $n$ to level $n+1$, and terminate if $H_n = 1$.
    \item From each node $v$ at level $n+1$, try all possible
    \emph{combinations} of a 2-3 move followed by a 3-2 move.
    Let $w$ be the endpoint of such a combination (so $w$ is also a
    node at level $n+1$).
    If $w \in G$ then merge the components and
    decrement $c$ if necessary.  Otherwise do nothing
    (since $w$ would never have been constructed in the original algorithm).
    \item If $c=1$ after this procedure then output $H_n=2$;
    otherwise terminate with no result.
\end{enumerate}

It is important to note that, if this two-phase approach \emph{does}
output a result, it will always be the same result as Algorithm~\ref{a-height}.
Essentially Step~2 simulates the transition from level $n+1$ to $n+2$ in
the original algorithm, with the advantage of a much smaller memory
footprint (since it does not store any nodes at level $n+2$), but
with the disadvantage that it cannot move on to level $n+3$ if required
(and so it cannot output any result if $H_n > 2$).

Of course by the time we reach $n=9$ there are reasons to suspect that
$H_n=2$ (following the pattern for $3 \leq n \leq 8$), and so this
two-phase method seems a reasonable (and ultimately successful) approach.
For $n=9$ the memory consumption was $\sim$50\,GB, which was (just)
within the capabilities of the host machine.

\subsection{Bounding path lengths}

Our next task is to compute bounds $L_n$ so that, from every node at
level $n$ of $\rpg{S^3}$, there is some simplification path of
length $\leq L_n$.  Once again we compute $L_n$ for $3 \leq n \leq 9$.

Because it is infeasible to perform arbitrary breadth-first searches through
$\rpg{S^3}$, we only consider paths that can be expressed as a series of
\emph{jumps}, where each jump involves a pair of 2-3 moves followed by a
pair of 3-2 moves.
This keeps the search space and memory usage small:
we always stay within levels $n$, $n+1$
and $n+2$, and we never need to explicitly store any nodes above
level $n$.  On the other hand, it means that our bounds $L_n$ are very
rough---there could be much shorter simplification paths that we do
not detect.

\begin{algorithm}[Algorithm for computing $L_n$] \label{a-length}
    First identify the set $I$ of all nodes at level $n$ of
    $\rpg{S^3}$ that have an arc running down to level $n-1$.
    Then conduct a breadth-first search across level $n$,
    beginning with the nodes in $I$ and using jumps as the
    steps in this breadth-first search.
    If $j$ is the maximum number of jumps required to reach
    any node in level $n$ from the initial set $I$,
    then output the final bound $L_n = 4j+1$.
\end{algorithm}

To identify the initial set $I$ we simply attempt to perform 3-2 moves.
When we process each node $v$, we must enumerate all jumps out from
$v$; that is, all combinations of two 2-3 moves followed by two 3-2 moves.
The number of such combinations is $O(n^4)$ in general.

This time we can guarantee both correctness and termination if
$3 \leq n \leq 9$.
Because $n \geq 3$ the initial set $I$ is non-empty
(Theorem~\ref{t-connected}), and from our height
experiments in Section~\ref{s-analysis-height} we know that our search
will eventually reach all of level $n$.  It follows that every node at
level $n$ of $\rpg{S^3}$ has a path of length $\leq 4j$ to some
$v \in I$, and therefore a simplification path of length $\leq 4j+1$.
Table~\ref{tab-length} shows how the search progresses for each $n$.

\begin{table}[htb]
\[ \small \begin{array}{l|r|r|r|r|r|r|r}
    \mbox{Input level $n$} & 3 & 4 & 5 & 6 & 7 & 8 & 9 \\
    \hline
    \mbox{Size of $I$} &
        3 & 46 & 504 & 6\,975 & 91\,283 & 1\,300\,709 & 18\,361\,866 \\
    \mbox{Nodes remaining} &
        17 & 82 & 793 & 6\,685 & 77\,794 & 841\,488 & 10\,329\,284 \\
    \mbox{Nodes remaining after 1 jump} &
        3 & 1 & 19 & 75 & 496 & 4\,222 & 31\,250 \\
    \mbox{Nodes remaining after 2 jumps} &
        0 & 0 & 1 & 1 & 0 & 6 & 12 \\
    \mbox{Nodes remaining after 3 jumps} & 0 & 0 & 0 & 0 & 0 & 0 & 0 \\
    \hline
    \mbox{Final bound $L_n$} &
    \mathbf{9} & \mathbf{9} & \mathbf{13} & \mathbf{13} & \mathbf{9} &
    \mathbf{13} & \mathbf{13}
\end{array} \]
\caption{Results obtained when running
    Algorithm~\ref{a-length} for $3 \leq n \leq 9$}
\label{tab-length}
\end{table}

This time the space and time complexities are linear and log-linear
respectively in the number of nodes at level $n$ (again with
further polynomial factors in $n$).  This is
considerably smaller than the number of nodes processed in
Algorithm~\ref{a-height}, and so for Algorithm~\ref{a-length} memory is
not a problem: the case $n=9$ runs in under 4\,GB.

\subsection{Parallelisation and performance} \label{s-analysis-perf}

For $n=9$, both Algorithms~\ref{a-height} and~\ref{a-length} have lengthy
running times: Algorithm~\ref{a-height} requires a very large number of
nodes to be processed at levels 9, 10 and 11 of the Pachner graph,
and Algorithm~\ref{a-length} spends significant time enumerating the
$O(n^4)$ available jumps from each node.

We can parallelise both algorithms by processing nodes simultaneously
(in step~2 of Algorithm~\ref{a-height}, and during each stage of the
breadth-first search in Algorithm~\ref{a-length}).  We must be careful
however to serialise any updates to the graph.

The experiments described here used
an 8-core 2.93\,GHz Intel Xeon X5570 CPU with 72\,GB of RAM
(using all cores in parallel).
With the serialisation bottlenecks, Algorithms~\ref{a-height} and
\ref{a-length} achieved roughly $90.5\%$ and $98.5\%$ CPU utilisation
for the largest case $n=9$, and ran for approximately
6 and 15 days respectively.
All code was written using the topological software package
{\regina} \cite{regina,burton04-regina}.

%
%

\section{Discussion} \label{s-conc}

As we have already noted, the bounds obtained in Section~\ref{s-analysis}
are astonishingly small.  Although we only consider $n \leq 9$,
this is not a small sample: the census includes $\sim 150$ million
triangulations including $\sim 31$ million one-vertex 3-spheres;
moreover, nine tetrahedra are enough to build complex and
interesting topological structures \cite{burton07-nor10,martelli01-or9}.
Our results lead us to the following conjectures:

\begin{conjecture} \label{cj-boundedheight}
    From any node at any level $n \geq 3$ of the graph $\rpg{S^3}$
    there is a simplification path of excess height $\leq 2$.
\end{conjecture}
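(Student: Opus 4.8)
Since this is a conjecture rather than a theorem, the following is the line of attack I would pursue together with the point at which I expect it to stall. The plan is to reduce the statement, via normal surface theory and Jaco--Rubinstein's theory of $0$-efficient triangulations \cite{jaco03-0-efficiency}, to a local question about simulating crushing by Pachner moves. It suffices to show that from \emph{every} node at level $n \geq 3$ of $\rpg{S^3}$ there is a path of excess height $\leq 2$ that reaches level $n-1$: iterating such paths yields a simplification path all the way down which still never climbs more than two levels above its starting point, since each dip to a lower level resets the available budget. (Also, because $2$-$3$ and $3$-$2$ moves do not change the number of vertices, any path built from such moves automatically stays inside $\rpg{S^3}$.) So fix a one-vertex triangulation $\tri$ of $S^3$ with $n \geq 3$ tetrahedra and split into two cases.

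\textbf{Case 1: $\tri$ is not $0$-efficient.} Then $\tri$ carries an embedded normal $2$-sphere $S$ distinct from the vertex link. Because the underlying manifold is $S^3$, the sphere $S$ bounds a ball on the side away from the vertex, and that ball meets no vertex of $\tri$; crushing $\tri$ along $S$ in the sense of Jaco--Rubinstein therefore produces a triangulation of $S^3$ with strictly fewer tetrahedra and no degenerate residues of the kind that can occur for general $3$-manifolds. The remaining task is to realise this crush by an explicit sequence of $2$-$3$ and $3$-$2$ moves: one would process the tetrahedra met by $S$ one at a time, collapse each along its triangular or quadrilateral piece using a short bounded-height Pachner sequence, and verify that the book-keeping moves needed to splice these local collapses together never call for more than two extra tetrahedra at once. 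Granting this, we reach a node at level $<n$ within excess height $\leq 2$, as required.

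\textbf{Case 2: $\tri$ is $0$-efficient.} The known classification of $0$-efficient triangulations of $S^3$ \cite{jaco03-0-efficiency} forces $\tri$ to have at most two tetrahedra, contradicting $n \geq 3$; if one prefers not to appeal to the full classification, one can instead use the theorem of Rubinstein and Thompson \cite{rubinstein95-3sphere,thompson94-thinposition} that a non-minimal $0$-efficient triangulation of $S^3$ contains an almost-normal $2$-sphere with a single octagonal (or tubed) piece, resolve that piece by one suitably chosen $2$-$3$ move so that the sphere becomes genuinely normal, and fall back to Case~1 having spent just a single extra tetrahedron.

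\textbf{The main obstacle.} Everything hinges on the step ``realise a crush (or a Jaco--Rubinstein $2$-$0$ move, or a normal isotopy) by a bounded-height sequence of $2$-$3$/$3$-$2$ moves''. These operations are not Pachner moves, and at present there is no bound of any kind---let alone a constant---on the Pachner height needed to simulate them; establishing such a bound is essentially the content of the conjecture, and this is where I expect the argument to stall. The difficulty is genuine: since the bound $\leq 2$ is tight (see the remark following Theorem~\ref{t-results}), a successful argument cannot merely descend greedily with an occasional one-tetrahedron inflation, but must exploit configurations in which a pair of $2$-$3$ moves prepares a sub-triangulation that a subsequent pair of $3$-$2$ moves then collapses \emph{past} the original level, exactly as in the jumps used in Algorithm~\ref{a-length}. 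An alternative route---exhibiting a combinatorial complexity on one-vertex $S^3$ triangulations that some bounded-height jump is always guaranteed to decrease---would also settle the conjecture, but no such complexity is currently known, and producing one appears to be no easier than the conjecture itself.
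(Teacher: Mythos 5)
You have labelled this a proposal, and rightly so, but it is worth being explicit that neither you nor the paper proves Conjecture~\ref{cj-boundedheight}: the paper only verifies it by computer for $3 \leq n \leq 9$ (Theorem~\ref{t-results}, via Algorithm~\ref{a-height}) and sketches a quite different intended line of attack --- an Euler characteristic argument shows every closed 3-manifold triangulation has an edge of degree $\leq 5$, and the hope is that at most two nearby 2-3 moves reduce such an edge to degree three, setting up a 3-2 move. Your route through normal surface theory and Jaco--Rubinstein crushing is a genuinely different strategy, but its pivotal step --- simulating a crush (or a normal isotopy, or a 2-0 move) by a sequence of 2-3/3-2 moves of excess height $\leq 2$ --- is exactly the open content of the conjecture, as you concede. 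So the proposal does not close the gap; it relocates it. Note also that the paper's hinted approach is purely local and combinatorial, which is precisely what lets it hope for a constant height bound, whereas a crush is a global operation on the triangulation, so a bounded-height Pachner simulation of it is, if anything, a harder target than the conjecture itself.

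Beyond the acknowledged stall, Case~2 contains a concrete error: there is no classification forcing a 0-efficient triangulation of $S^3$ to have at most two tetrahedra. Jaco and Rubinstein \cite{jaco03-0-efficiency} restrict the vertex count of 0-efficient triangulations, but one-vertex 0-efficient triangulations of $S^3$ exist with arbitrarily many tetrahedra; indeed these are exactly the hard instances for 3-sphere recognition, which is why the Rubinstein--Thompson algorithm must search for \emph{almost} normal spheres after 0-efficiency has been achieved rather than simply inspecting a small triangulation. So Case~2 cannot be dismissed by classification, and your fallback --- resolving an octagonal (or tubed) almost normal sphere into a genuine normal sphere by a single 2-3 move --- is asserted without justification; the known resolutions of octagonal pieces do not obviously take this form, and tubed almost normal spheres are a separate difficulty. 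In addition, your Case~1 claim that the ball bounded by $S$ avoids the vertex and that crushing produces no degenerate residue needs care (one must take $S$ to be, say, a vertex or fundamental non-vertex-linking sphere and argue about which side is crushed). None of these repairs would rescue the argument, however, since the bounded-height simulation step remains unproved; as it stands the proposal is an interesting reduction sketch, not a proof, and it does not follow the paper's (also unproven) low-degree-edge strategy.
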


If true, this result (combined with Theorem~\ref{t-numvert})
would reduce Mijatovi{\'c}'s bound in Theorem~\ref{t-mij}
from $\exp(O(n^2))$ to $\exp(O(n\log n))$ for one-vertex triangulations
of the 3-sphere.  Furthermore, it would help explain why 3-sphere
triangulations are so easy to simplify in practice.

There are reasons to believe that a proof might be possible.
As a starting point,
a simple Euler characteristic argument shows that
every closed 3-manifold triangulation has an edge of degree $\leq 5$;
using \emph{at most two} ``nearby'' 2-3 moves, this edge can be made
degree~three (the setting for a possible 3-2 simplification).
The details will appear in the full version of this paper.

\begin{conjecture} \label{cj-boundedmoves}
    From any node at any level $n \geq 3$ of the graph $\rpg{S^3}$
    there is a simplification path of length $\leq 13$.
\end{conjecture}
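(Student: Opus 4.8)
The plan is to build on the Euler\nobreakdash-characteristic observation used to attack Conjecture~\ref{cj-boundedheight}, but now to control the \emph{number} of moves and not merely the levels visited. This is the genuinely stronger requirement: each of levels $n-1,\dots,n+2$ already contains $\exp(\Theta(n\log n))$ triangulations (Theorem~\ref{t-numvert}), so an arbitrary path between two of them could be enormously long, and the content of the conjecture is that a short one always exists. Fix a one-vertex triangulation $\tri$ of $S^3$ with $n\ge 3$ tetrahedra. Then $1-E+2n-n=0$ forces $E=n+1$, while the edge degrees sum to $6n$, so $\tri$ has an edge $e$ of degree $\le 5$. The target is to produce, from $\tri$, a sequence of at most twelve 2-3 and 3-2 moves that never leaves levels $n-1,\dots,n+2$ and that arrives at a triangulation on which some 3-2 move is available; one further move then gives a simplification path of length $\le 13$. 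In the language of Algorithm~\ref{a-length} this is the assertion that the jump distance $j$ from the ready-to-simplify set $I$ is always $\le 3$, so that $L_n=4j+1\le 13$, and the split in Table~\ref{tab-length} between $L_n=9$ and $L_n=13$ suggests two jumps suffice outside a bounded list of awkward cases.

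I would first treat the \emph{generic} situation: $\deg(e)\in\{3,4,5\}$, every face containing $e$ lies between two distinct tetrahedra, and, when $\deg(e)=3$, the three tetrahedra around $e$ are pairwise distinct. A 2-3 move on a face $F\ni e$ lowers $\deg(e)$ by one while raising the degrees of the six ``side'' edges of the bipyramid and creating a fresh degree-three edge; so at most two such moves make $e$ degree three. The subtlety — and the reason the bare count ``two up, one down'' does not already finish — is that after those moves and the 3-2 move on $e$ we sit at level $n+1$, not below $n$, so one must show that a bounded amount of further local surgery (exploiting the degree-three edges just manufactured, and the edges whose degree was incidentally pushed down to $2$ or $3$) both recovers a net decrease and respects the height-2 window. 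The plan is to package this precisely as the jumps of Algorithm~\ref{a-length}, aiming for at most two jumps here.

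The remaining work is a catalogue of exceptional configurations, and this is where a uniform constant is delicate: (i) an edge of degree $1$ or $2$; (ii) a face incident to $e$ that is glued to itself, or a prospective degree-three edge whose surrounding tetrahedra coincide in pairs; (iii) the danger that a degree-lowering 2-3 move drives a nearby edge into state (i) or (ii). Isolated instances of (i) and (ii) are the classical territory of ``book'', ``bubble'' and edge-unfolding manoeuvres, each imitable by a handful of Pachner moves within bounded height; one would enumerate the possibilities, charge each to at most one extra jump, and check that no configuration forces a fourth jump, thereby pinning the constant at $13$. A realistic first milestone, though, is only the weaker statement ``length $\le C$ for some absolute constant $C$''; reducing $C$ to the experimentally observed $13$ — and understanding why the true optimum looks smaller still — is a secondary refinement.

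The principal obstacle, and the reason this must remain a conjecture, is item (iii): there is no \emph{a priori} argument that the clean-up cannot cascade with $n$, since a single 2-3 move raises many edge degrees at once and can create new low-degree edges anywhere in the triangulation, and the $n\le 9$ verification only certifies the absence of such a cascade in that range. I expect a genuine proof will need a structural input invisible to local edge counting: most plausibly a bounded-complexity ``simplification certificate'' — an almost-normal 2-sphere, or a $0$-efficiency witness in the sense of Jaco--Rubinstein — guaranteed inside every one-vertex $S^3$ triangulation, together with a lemma converting such a certificate (whose ``crushing'' is not itself a local move) into an explicit Pachner sequence of bounded length and bounded height. Supplying that lemma, with a bound small enough to meet $13$, is the crux I do not currently see how to finish.
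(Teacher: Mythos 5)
The statement you were asked to prove is not a theorem of the paper: Conjecture~\ref{cj-boundedmoves} is presented as an open conjecture, supported only by the computer verification of Theorem~\ref{t-results} (Algorithm~\ref{a-length} run over the census, giving $L_n = 4j+1 \le 13$ for $3 \le n \le 9$) and by the heuristic low-degree-edge remark in Section~\ref{s-conc}, which the paper offers in support of the weaker height Conjecture~\ref{cj-boundedheight}, not of this one. So there is no proof in the paper to compare yours against, and your proposal, as you yourself say, is not a proof either: it is a plan whose crux --- the clean-up after forcing an edge down to degree three --- is left open. That self-assessment is accurate. Your Euler-characteristic computation ($E=n+1$ for a one-vertex triangulation, degree sum $6n$, hence an edge of degree $\le 5$) matches the paper's hint, and your translation of the target into the jump language of Algorithm~\ref{a-length}, with $L_n=4j+1$ and $j\le 3$ as the experimental content, is faithful to how the bound $13$ actually arises.

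The gap you flag in item~(iii) is the real obstruction, and it is worth being precise about why no purely local argument of the kind you sketch can close it as stated: each 2-3 move used to lower $\deg(e)$ raises the degrees of several nearby edges and can create new low-degree or degenerate configurations, so bounding the total number of moves by an absolute constant requires showing that this interference terminates after a bounded number of rounds \emph{uniformly in $n$}, and nothing in the edge-degree bookkeeping supplies such a bound --- indeed the paper only ventures a constant because the observed jump count stayed at $3$ from $n=5$ through $n=9$, and it explicitly entertains the fallback that $L_n$ might grow slowly (e.g.\ $O(\log n)$ or $O(\sqrt n)$). One small inaccuracy: you cite Theorem~\ref{t-numvert} for the size of the level sets of $\rpg{S^3}$, but that theorem counts all closed 3-manifold triangulations (its lower-bound construction produces multi-vertex triangulations of $F \times S^1$), so it does not literally give $\exp(\Theta(n\log n))$ one-vertex 3-sphere triangulations; this is only a motivational aside in your write-up, but it should not be presented as a consequence of that theorem. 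In short: you correctly identify the statement as open, you sketch the same circle of ideas the paper gestures at for the easier Conjecture~\ref{cj-boundedheight}, and you honestly isolate the missing lemma (a uniform bound on the clean-up cascade, or a bounded-length, bounded-height Pachner realisation of a 0-efficiency or almost-normal-sphere certificate); none of this constitutes a proof, nor does the paper claim to have one.
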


This is a bolder conjecture, since the length experiments are less
consistent in their results.  However, the fact remains that every
3-sphere triangulation of size $n \leq 9$ can be simplified after
just three jumps, and this number does not rise between $n=5$
and $n=9$.

If true, this second conjecture would yield an immediate polynomial-time
3-sphere recognition algorithm: for any triangulation of size
$n \geq 3$ we can enumerate all $O(n^{4 \times 3})$ combinations of three
jumps, and test each resulting triangulation for a 3-2 move down to
$n-1$ tetrahedra.  By repeating this process $n-2$ times, we will
achieve either a recognisable 2-tetrahedron triangulation of the 3-sphere,
or else a proof that our input is not a 3-sphere triangulation.

Even if Conjecture~\ref{cj-boundedmoves} is false and the length
bounds do grow with $n$, this growth rate appears to be extremely slow.
A growth rate of $L_n \in O(\log n)$ or even $O(\sqrt{n})$
would still yield the first known sub-exponential 3-sphere recognition
algorithm (using the same procedure as above), which would be a
significant theoretical breakthrough in algorithmic 3-manifold topology.

Looking forward, it is natural to ask whether this behaviour
extends beyond the 3-sphere to triangulations of
arbitrary 3-manifolds.  Initial experiments suggest ``partially'':
the Pachner graphs of other 3-manifolds also appear to be
remarkably well-connected, though not enough to support results as
strong as Conjectures~\ref{cj-boundedheight} and~\ref{cj-boundedmoves} above.
We explore these issues further in the full version of this paper.

%
%

\section*{Acknowledgements}

The author is grateful to the Australian Research Council for their
support under the Discovery Projects funding scheme (project DP1094516).
Computational resources used in this work
were provided by the Queensland Cyber Infrastructure Foundation
and the Victorian Partnership for Advanced Computing.

%
%

\small
\bibliographystyle{amsplain}
\bibliography{pure}

%
%

\bigskip
\smallskip
\noindent
Benjamin A.~Burton \\
School of Mathematics and Physics, The University of Queensland \\
Brisbane QLD 4072, Australia \\
(bab@maths.uq.edu.au)

%
%

\normalsize
\appendix

\section*{Appendix: Additional proofs}

Here we offer full proofs for Theorem~\ref{t-numvert} and
Lemma~\ref{l-can-fast}, which were omitted from the main text to
simplify the exposition.

\setcounter{theorem}{\arabic{ctr-numvert}}
\begin{theorem}
    The number of distinct isomorphism classes of 3-manifold
    triangulations of size $n$ grows at an asymptotic rate of
    $\exp(\Theta(n\log n))$.
\end{theorem}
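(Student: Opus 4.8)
The bound $\exp(\Theta(n\log n))$ splits into two inequalities that I would prove by quite different means: the upper bound $\exp(O(n\log n))$ is a crude enumeration of face pairings, while the lower bound $\exp(\Omega(n\log n))$ needs an explicit family of triangulations together with an argument that they are pairwise non-isomorphic.

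\textbf{Upper bound.}
Any 3-manifold triangulation of size $n$ is in particular a choice of a perfect matching on its $4n$ tetrahedron faces, together with an affine identification of the two triangles for each of the $2n$ matched pairs. There are $(4n-1)!!$ matchings and $3!=6$ identifications per pair, so at most $(4n-1)!!\cdot 6^{2n}$ labelled gluings, hence at most that many isomorphism classes of 3-manifold triangulations. Since $\log\bigl((4n-1)!!\cdot 6^{2n}\bigr)=\Theta(n\log n)$ by Stirling's formula, this is $\exp(O(n\log n))$. (This is essentially the count already quoted in Section~\ref{s-tools-census}.)

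\textbf{Lower bound.}
It suffices to exhibit $\exp(\Omega(n\log n))$ pairwise non-isomorphic 3-manifold triangulations using $\Theta(n)$ tetrahedra; restricting to a sparse set of sizes is harmless for an asymptotic rate, since successive census counts cannot drop by more than a polynomial factor (a $2$-$3$ move on a canonically chosen internal face injects size-$m$ triangulations into size $m+1$ up to $O(m)$-fold ambiguity). I would build the family from a combinatorial source that is rich but cheap to triangulate. The cleanest candidate: for each simplicial triangulation $S$ of a closed surface with $t$ triangles, subdivide each prism $\Delta^2\times I$ into three tetrahedra (the standard staircase subdivision, using a fixed vertex order on $S$) and identify $S\times\{0\}$ with $S\times\{1\}$; this produces a genuine triangulation of the 3-manifold $S\times S^1$ with $n=3t$ tetrahedra, so the 3-manifold constraint is satisfied automatically. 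There are $\exp(\Omega(t\log t))=\exp(\Omega(n\log n))$ isomorphism classes of such $S$ (allowing the genus to grow linearly with $t$), which gives enough inputs. (An alternative source: start from a fixed triangulated handlebody and perform $\Theta(n)$ manifold-safe layering moves, each choosing one of the $\Theta(n)$ current boundary edges; this has the virtue of hitting every sufficiently large size directly, but needs more care in closing up the boundary.)

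\textbf{The main obstacle: de-duplication.}
A priori, many of the constructed triangulations could be isomorphic, and the heart of the proof is to show they fall into $\exp(\Omega(n\log n))$ distinct classes. Note that the easy route — distinct $S\times S^1$ as manifolds — is \emph{not} available, since distinct triangulations of the \emph{same} surface $S$ yield triangulations of the same 3-manifold; one genuinely has to recover the combinatorial data. My plan is to recover (most of) the triangulation of $S$ from the 3-manifold triangulation: the ``vertical'' edges $\{v\}\times I$ become exactly the loop edges of the 3-triangulation (this is where simpliciality of $S$ is used, to avoid spurious loops), so they mark the vertices of $S$; one then isolates the ``horizontal'' faces — the copies of the triangles of $S$ — by a local combinatorial criterion (they are the faces shared by the first and last tetrahedron of a prism), and collapsing the vertical edges returns $S$. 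Making this recovery precise, with an explicit local criterion and a bound of $\exp(o(n\log n))$ on the residual ambiguity, is the technical crux; granting it, dividing the number of constructions by the overcounting still leaves $\exp(\Omega(n\log n))$ isomorphism classes, matching the upper bound and completing the proof.
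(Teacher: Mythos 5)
Your upper bound and your ``interpolate across sizes'' step are fine (they match the paper's), but the two load-bearing parts of the lower bound are both left unestablished, and neither is routine. First, you assert that there are $\exp(\Omega(t\log t))$ pairwise non-isomorphic \emph{simplicial} triangulations of closed surfaces with $t$ triangles. This is true, but it is not a throwaway fact: for any fixed genus the count grows only exponentially in $t$, so you genuinely need the genus to grow and you need an argument or a citation (e.g.\ lower bounds on the number of non-isomorphic triangular embeddings of complete graphs) to reach factorial-type growth. The paper sidesteps this entirely by working with arbitrary pairwise identifications of the edges of $2m$ triangles (pseudo-triangulations): every such gluing is automatically a closed surface, so the labelled objects can be counted directly and one simply divides by the size $(2m)!\cdot 6^{2m}$ of the relabelling group. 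Your insistence on simplicial $S$ (needed so that the loop edges are exactly the vertical edges) is precisely what makes the counting input nontrivial, and you do not supply it.

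Second, the de-duplication step --- recovering $S$ from the triangulation of $S\times S^1$ up to small ambiguity --- is exactly the crux of the lower bound, and you explicitly defer it (``granting it\dots''). Your loop-edge observation is correct, but identifying the horizontal faces by a ``local combinatorial criterion'' is not established, and with only three tetrahedra per prism and a single layer glued top-to-bottom, accidental combinatorial coincidences are much harder to exclude than you suggest; an isomorphism need not respect your intended vertical/horizontal structure unless the criterion is proved to be intrinsic and correct. The paper engineers its fattening specifically to make this step easy: each triangle becomes a $14$-tetrahedron prism capped by two-tetrahedron pillows, and the pillow's internal vertex meets only two tetrahedra while no other vertex does, so any isomorphism must carry pillows to pillows and flattening them recovers the surface triangulation immediately. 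So your proposal has the right architecture (count gluings for the upper bound; build $F\times S^1$ from surface triangulations and recover $F$ for the lower bound), but as written the lower bound rests on two unproved claims, one of which (the recovery lemma) is the heart of the theorem.
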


\begin{proof}
    An upper bound of $\exp(O(n\log n))$ is easy to obtain.
    If we count all possible gluings of tetrahedron faces, without regard
    for isomorphism classes or other constraints (such as the need for
    the triangulation to represent a closed 3-manifold), we obtain an
    upper bound of
    \[ \left[(4n-1)\times(4n-3)\times\cdots\times3\times1 \right]
        \cdot 6^{2n} < (4n)^{2n} \cdot 6^{2n} \in \exp(O(n\log n)). \]

    Proving a lower bound of $\exp(\Omega(n\log n))$ is more
    difficult---the main complication is that most pairwise
    identifications of tetrahedron faces do not yield a 3-manifold at all
    \cite{dunfield06-random-covers}.  We work around this by first counting
    \emph{2-manifold} triangulations (which are much
    easier to obtain), and then giving a construction that ``fattens''
    these into 3-manifold triangulations without
    introducing any unwanted isomorphisms.

    To create a 2-manifold triangulation of size $2m$ (the size
    must always be even), we identify the $6m$ edges of $2m$
    distinct triangles in pairs.  Any such identification will always
    yield a closed 2-manifold (that is, nothing can ``go wrong'',
    in contrast to the three-dimensional case).

    There is, however, the issue of connectedness to deal with
    (recall from the beginning of Section~\ref{s-prelim} that all
    triangulations in this paper are assumed to be connected).  To ensure that
    a labelled 2-manifold triangulation is connected, we insist that for each
    $k=2,3,\ldots,2m$, the first edge of the triangle labelled $k$ is
    identified with
    some edge from one of the triangles labelled $1,2,\ldots,k-1$.  Of course
    many connected labelled 2-manifold triangulations do not have this
    property, but since we are proving a lower bound this does not matter.

    We can now place a lower bound on the number of labelled
    2-manifold triangulations.  First we choose
    which edges to pair with the first edges from triangles
    $2,3,\ldots,2m$; from the property above we have
    $3 \times 4 \times \ldots \times 2m \times (2m+1) = \frac12 (2m+1)!$
    choices.
    We then pair off the remaining $2m+2$ edges, with
    $(2m+1) \times (2m-1) \times \ldots \times 3 \times 1 =
    (2m+1)!/2^m m!$ possibilities overall.
    Finally we note that each of the $3m$ pairs of edges can be identified
    using one of two possible orientations.
    The total number of labelled 2-manifold triangulations is therefore
    at least
    \[ \frac{(2m+1)!}{2} \cdot \frac{(2m+1)!}{2^m m!} \cdot 2^{3m}
        = \frac{(2m+1)! \cdot (2m+1)! \cdot 2^{2m}}{2 \cdot m!}. \]

    Each isomorphism class can contain at most
    $(2m)! \cdot 6^{2m}$ labelled triangulations, and so the number of
    distinct \emph{isomorphism classes} of 2-manifold triangulations is
    bounded below by
    \begin{align*}
    \frac{(2m+1)! \cdot (2m+1)! \cdot 2^{2m}}
            {2 \cdot m! \cdot (2m)! \cdot 6^{2m}} &=
        \frac{(2m+1) \cdot (2m+1)!}{2 \cdot m! \cdot 3^{2m}} \\
        &> (2m+1) \times 2m \times \cdots \times (m+2) \times (m+1) \times
            \left(\tfrac{1}{9}\right)^m \\
        &> (m+1)^{m+1} \cdot \left(\tfrac{1}{9}\right)^m \\
        &\in \exp(\Omega(m\log m)).
    \end{align*}

    We fatten each 2-manifold triangulation into a 3-manifold triangulation
    as follows.  Let $F$ denote the closed 2-manifold described by the
    original triangulation.
    \begin{enumerate}
        \item Replace each triangle with a prism and glue the vertical
        faces of adjacent prisms together, as illustrated in
        Figure~\ref{sub-fatten-prisms}.
        This represents a \emph{bounded} 3-manifold, which is the
        product space $F \times I$.

        \item Cap each prism at both ends with a triangular pillow,
        as illustrated in Figure~\ref{sub-fatten-pillow}.
        The two faces of each pillow are glued to the top and bottom of
        the corresponding prism, effectively converting each prism into
        a solid torus.  This produces the \emph{closed} 3-manifold
        $F \times S^1$, and the complete construction is illustrated in
        Figure~\ref{sub-fatten-all}.

        \item Triangulate each pillow using two tetrahedra, which are joined
        along three internal faces surrounding an internal vertex.
        Triangulate each prism using $14$ tetrahedra, which again all
        meet at an internal vertex.
        Both triangulations are illustrated in Figure~\ref{sub-fatten-tri}.
    \end{enumerate}

    \begin{figure}[htb]
        \centering
        \begin{tabular}{c@{\qquad\qquad}c}
        \subfigure[Replacing triangles with prisms]{%
            \label{sub-fatten-prisms}%
            \includegraphics[scale=0.45]{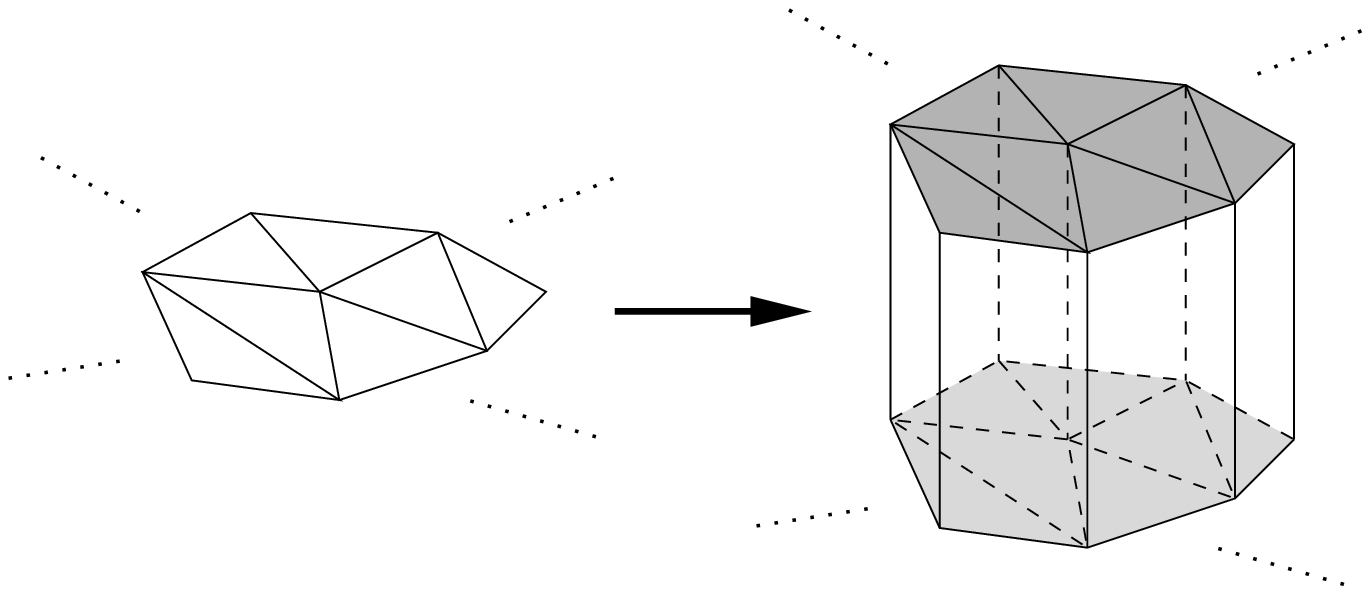}}%
        &
        \subfigure[Capping prisms with pillows]{%
            \hspace{2cm}%
            \label{sub-fatten-pillow}%
            \includegraphics[scale=0.45]{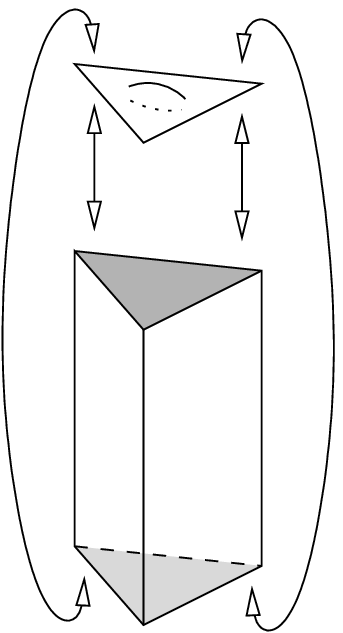}%
            \hspace{2cm}}
        \\
        \subfigure[The complete construction]{%
            \label{sub-fatten-all}%
            \includegraphics[scale=0.45]{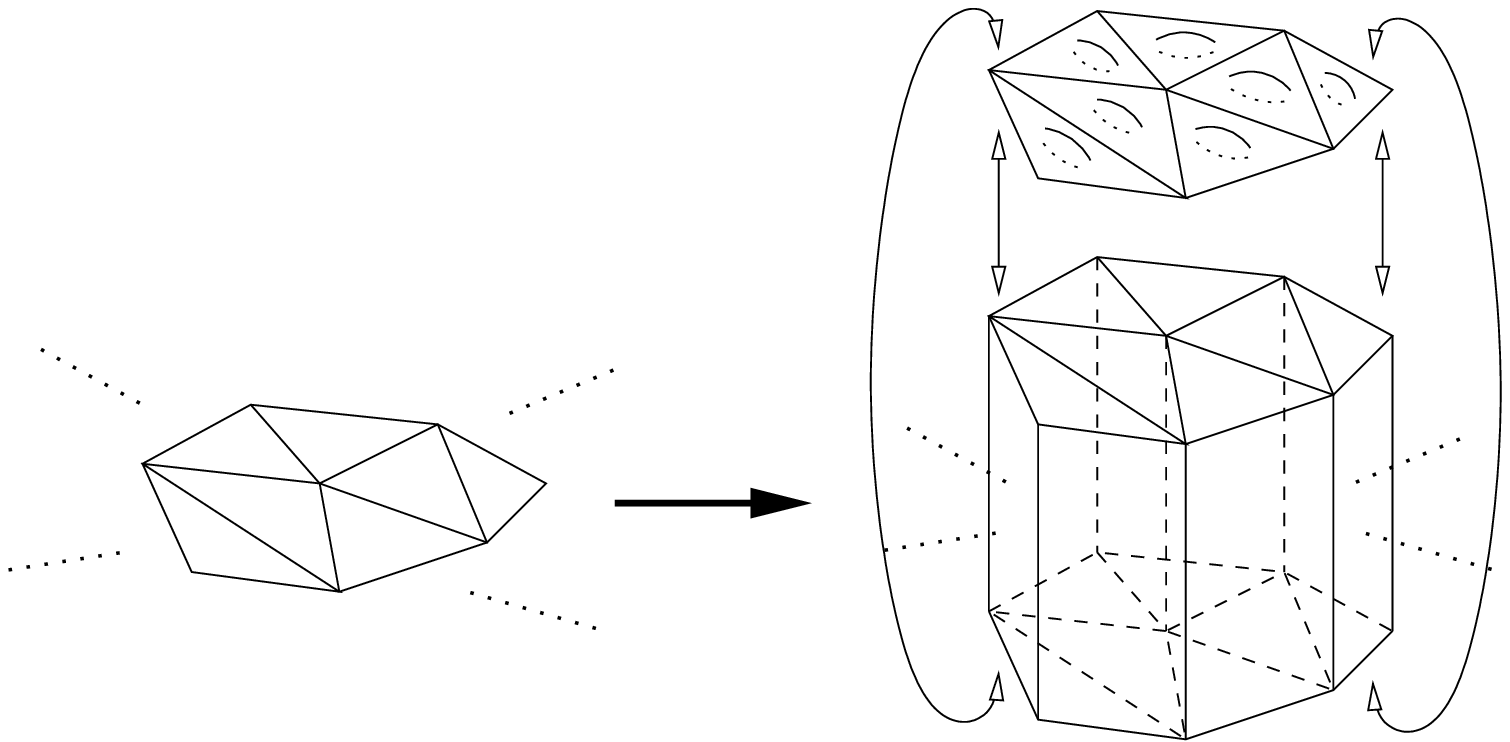}}
        &
        \subfigure[Triangulating prisms and pillows]{%
            \hspace{2cm}%
            \label{sub-fatten-tri}%
            \includegraphics[scale=0.45]{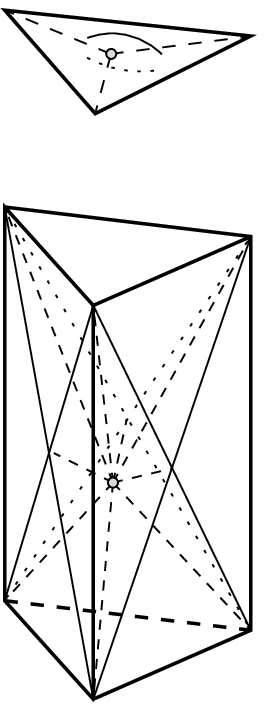}%
            \hspace{2cm}}
        \end{tabular}
        \caption{Fattening a 2-manifold triangulation into a 3-manifold
            triangulation}
        \label{fig-fatten}
    \end{figure}

    If the original 2-manifold triangulation uses $2m$ triangles, the
    resulting 3-manifold triangulation uses $n=32m$ tetrahedra.
    Moreover, if two 3-manifold triangulations obtained using this
    construction are isomorphic, the original 2-manifold triangulations
    must also be isomorphic.  The reason for this is as follows:
    \begin{itemize}
        \item Any isomorphism between two such 3-manifold triangulations
        must map triangular pillows to triangular pillows.  This is
        because the internal vertex of each triangular pillow meets only
        two tetrahedra, and no other vertices under our construction
        have this property.

        \item By ``flattening'' the triangular pillows into
        2-dimensional triangles, we thereby obtain an
        isomorphism between the underlying 2-manifold triangulations.
    \end{itemize}

    It follows that, for $n=32m$, we obtain a family of
    $\exp(\Omega(m\log m)) = \exp(\Omega(n\log n))$ pairwise
    non-isomorphic 3-manifold triangulations.

    This result is easily extended to $n \not\equiv 0 \bmod 32$.
    Let $V_n$ denote the number of distinct isomorphism classes of
    3-manifold triangulations of size $n$.
    \begin{itemize}
        \item Each triangulation of size $n$ has at least $n-1$
        distinct 2-3 moves available (since any face joining two
        distinct tetrahedra defines a 2-3 move, and there are at least
        $n-1$ such faces).

        \item On the other hand, each triangulation of size $n+1$
        has at most $6(n+1)$ distinct 3-2 moves available (since each 3-2
        move is defined by an edge that meets three distinct tetrahedra,
        and the triangulation has at most $6(n+1)$ edges in total).
    \end{itemize}

    It follows that $V_{n+1} \geq V_n \cdot \frac{n-1}{6(n+1)} \geq V_n/18$
    for any $n > 1$.  This gives
    $V_{32m+k} \geq V_{32m} / 18^{31}$ for sufficiently large $m$ and
    all $0 \leq k < 32$, and so we obtain
    $V_n \in \exp(\Omega(n\log n))$ with no restrictions on $n$.
\end{proof}

\begin{remark}
    Of course, we expect that $V_{n+1} \gg V_n$ (and indeed we see this
    in the census).  The bounds
    that we use to show $V_{n+1} \geq V_n/18$ in the proof above are
    very loose, but they are sufficient for the asymptotic result that we seek.
\end{remark}

\setcounter{theorem}{\arabic{ctr-can-fast}}
\begin{lemma}
    For any triangulation $\tri$ of size $n$, there are precisely
    $24n$ canonical labellings of $\tri$, and these can be enumerated in
    $O(n^2\log n)$ time.
\end{lemma}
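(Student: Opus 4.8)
The plan is to establish the two claims — that there are exactly $24n$ canonical labellings, and that they can all be produced in $O(n^2 \log n)$ time — by showing that each canonical labelling is uniquely determined by the choice of which tetrahedron is labelled~$1$ together with an ordering of its four vertices, and that this determination can be carried out efficiently by a breadth-first traversal of the face-gluing structure. First I would argue the forcing. Fix a triangulation $\tri$ and suppose we have chosen a tetrahedron to be tetrahedron~$1$ and fixed one of the $4! = 24$ labellings of its vertices; I claim the rest of the labelling is then forced. Indeed, once tetrahedron~$1$ is labelled, condition~(i) of the canonical-labelling definition says tetrahedron~$2$ must be the first new tetrahedron encountered as we read off $A_{1,1}, A_{1,2}, A_{1,3}, A_{1,4}$ — that is, the tetrahedron glued to the lowest-numbered face of tetrahedron~$1$ whose partner is not tetrahedron~$1$ itself. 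Condition~(ii) then forces the vertex labelling of tetrahedron~$2$: the gluing across that face must be the identity map on the three vertices $v \ne f$, which pins down three of the four vertex labels of tetrahedron~$2$, and the fourth (vertex~$f$) is determined as the unique remaining label. Proceeding inductively: having labelled tetrahedra $1, \ldots, i-1$ and knowing the sequence $A_{1,1}, \ldots, A_{i-1,4}$ up to the point where tetrahedron~$i-1$ first appeared, we read onward to find the first entry referring to a not-yet-labelled tetrahedron; that tetrahedron becomes~$i$, and its vertex labelling is forced by condition~(ii) exactly as before. So the labelling is a deterministic function of the initial choice, giving at most $24n$ canonical labellings.

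Next I would check that every one of these $24n$ choices actually yields a \emph{canonical} labelling — i.e. that the forced labelling satisfies both (i) and (ii). Condition~(ii) holds by construction, since at each step we defined the vertex labelling precisely so that the first-appearance gluing is the identity. For condition~(i) I need that for $2 \le i < j$, tetrahedron~$i$ first appears before tetrahedron~$j$; this is immediate from the construction, since we assign the label $i$ to the $i$-th distinct tetrahedron encountered in the reading order, so tetrahedron~$i$ necessarily first appears in an earlier position than tetrahedron~$j$. One subtlety to address: the traversal must actually reach all $n$ tetrahedra, which is where connectedness of $\tri$ (assumed throughout the paper) is used — every tetrahedron is reachable from tetrahedron~$1$ through a chain of face gluings, so the BFS labels all of them and no label in $\{1, \ldots, n\}$ is left unused. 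Hence each of the $24n$ choices gives a genuine canonical labelling, and combined with the previous paragraph the count is exactly $24n$.

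For the time bound, I would describe the labelling procedure as a breadth-first search over tetrahedra: maintain a queue of labelled tetrahedra, and when we dequeue tetrahedron~$t$, examine its four faces in order $f = 1, 2, 3, 4$; for each face whose partner tetrahedron is not yet labelled, assign it the next available label, compute its forced vertex labelling from the identity-map condition (an $O(1)$ operation, since it involves composing a few permutations on four elements), and enqueue it. Each of the $n$ tetrahedra is processed once, and each processing step touches four faces and does a constant amount of permutation bookkeeping, except that writing down or comparing a tetrahedron label costs $O(\log n)$ bits; this gives $O(n \log n)$ time to produce one canonical labelling from one initial choice. Running over all $24n$ initial choices yields $O(n^2 \log n)$ total, as claimed. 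The main obstacle, and the part deserving the most care, is the bookkeeping in the inductive forcing argument: making precise exactly which vertex labels of tetrahedron~$i$ are determined at its first appearance and verifying that no contradiction can arise later (i.e. that subsequent gluings involving an already-labelled tetrahedron are automatically consistent rather than imposing new constraints) — this is straightforward but requires careful tracking of the identification data, and is where a fully rigorous write-up spends most of its effort.
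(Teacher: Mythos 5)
Your proposal is correct and follows essentially the same argument as the paper: fix one of the $n$ choices of tetrahedron~1 and one of the $24$ vertex labellings, show the remaining labels are forced by scanning the faces in order and applying the identity-map condition at each first appearance (with connectedness guaranteeing all tetrahedra are reached), and then charge $O(n\log n)$ per labelling over $24n$ labellings to get $O(n^2\log n)$. Your queue-based breadth-first phrasing is just a restatement of the paper's single pass through the face list $F_{1,1},\ldots,F_{n,4}$, so there is no substantive difference.
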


\begin{proof}
    For $n=1$ the result is trivial, since all $24=4!$ possible labellings
    are canonical.  For $n>1$ we observe that, if we choose
    (i)~any one of the $n$ tetrahedra to label as tetrahedron~1, and
    (ii)~any one of the $24$ possible labellings of its four vertices,
    then there is one and only one way to extend these choices to a canonical
    labelling of $\tri$.

    To see this, we can walk through the list of faces
    $F_{1,1},F_{1,2},F_{1,3},F_{1,4},F_{2,1},\ldots,F_{n,4}$,
    where $F_{t,i}$ represents face $i$ of tetrahedron $t$.
    The first face amongst $F_{1,1},\ldots,F_{1,4}$ that is joined to an
    unlabelled tetrahedron must in fact be joined to tetrahedron~2 using
    the identity map; this allows us to deduce tetrahedron~2 as well as
    the labels of its four vertices.

    We inductively extend the labelling in this manner: once we have
    labelled tetrahedra $1,\ldots,k$ and their corresponding vertices,
    the first face amongst $F_{1,1},\ldots,F_{k,4}$ that is joined to
    an unlabelled tetrahedron must give us tetrahedron $k+1$ and
    the labels for its four vertices (again using the identity map).
    The resulting labelling is canonical, and all of the labels can be
    deduced in $O(n\log n)$ time using a single pass through the list
    $F_{1,1},\ldots,F_{n,4}$.  The $\log n$ factor is required for
    manipulating tetrahedron labels, each of which requires $O(\log n)$ bits.

    It follows that there are precisely $24n$ canonical labellings
    of $\tri$, and that these can be enumerated in $O(n^2\log n)$ time
    using $24n$ iterations of the procedure described above.
\end{proof}

\end{document}